\documentclass{amsart}
\usepackage{amsmath,amssymb,hyperref,hyperref,url}
\usepackage[vcentermath]{youngtab}
\usepackage{url,hyperref}

\newtheorem{theorem}{Theorem}[section]
\newtheorem*{theorem*}{Theorem}
\newtheorem{lemma}[theorem]{Lemma}
\newtheorem{corollary}[theorem]{Corollary}
\newtheorem{proposition}[theorem]{Proposition}

\newtheorem{claim}[theorem]{Claim}
\theoremstyle{definition}
\newtheorem{definition}[theorem]{Definition}
\newtheorem{example}[theorem]{Example}
\newtheorem{remark}[theorem]{Remark}

\def\<{{\langle}}
\def\>{{\rangle}}
\def\ZZ{{\mathbb{Z}}}

\def\W{{\mathcal{W}}}
\def\CC{{\mathbb{C}}}
\def\GL{{\operatorname{GL}}}
\def\SS{{\mathbb{S}}}
\def\S{{\mathfrak{S}}}
\def\Super{{\operatorname{Super}}}
\def\tab{{\operatorname{tab}}}

\title{Tableaux in the Whitney Module of a Matroid} \author{Andrew
  Berget}
\address{Mathematical Sciences Building\\ One Shields Ave\\
  University of California\\ Davis, CA 95616}
\email{berget@math.ucdavis.edu} \thanks{Some of this work appeared in
  the author's 2009 Ph.D. thesis from the University of
  Minnesota. Thanks are due to Victor Reiner for advising this
  thesis. Also, the author would like to thank Andrea Brini for
  sharing his work with Regonati on the letter-place approach to the
  Whitney algebra.} \date{\today}

\begin{document}

\pagestyle{plain}
\begin{abstract}
  The Whitney module of a matroid is a natural analogue of the tensor
  algebra of the exterior algebra of a vector space that takes into
  account the dependencies of the matroid.

  In this paper we indicate the role that tableaux can play in
  describing the Whitney module. We will use our results to describe a
  basis of the Whitney module of a certain class of matroids known as
  freedom (also known as Schubert, or shifted) matroids. The doubly
  multilinear submodule of the Whitney module is a representation of
  the symmetric group. We will describe a formula for the multiplicity
  hook shapes in this representation in terms of the no broken circuit
  sets.
\end{abstract}

\maketitle
\section{Introduction and Motivation}

If $V$ is a complex vector space of dimension $k$ we let $\bigwedge V$
denote the exterior algebra of $V$, $T^n(V)$ the $n$-fold tensor
product $V^{\otimes n}$ and $T(V)$ the tensor algebra $\bigotimes_{n
  \geq 0} T^n(V)$. For the moment we will only be concerned with the
\textit{$\GL(V)$-module} structure of $T(\bigwedge V)$.  We begin by
seeing how tableaux describe a basis for the $\CC$-vector space
$T(\bigwedge V)$.

Recall that the irreducible polynomial representations of $\GL(V)$ are
indexed by partitions $\lambda$ with length at most $\dim V$. We denote
the irreducible representation with highest weight $\lambda$ by
$\SS^\lambda(V)$. It follows from the Weyl character formula that the
dimension of $\SS^\lambda(V)$ is the number of column strict tableaux
of shape $\lambda$ with entries in $[\dim V]:=\{1,2,\dots,\dim
V\}$. Using Young's Rule we obtain the $\GL(V)$-module decomposition
of the tensor product of exterior products:
\[ 
{\bigwedge}^{\mu_{1}} V \otimes {\bigwedge}^{\mu_2} V \otimes \dots
\otimes {\bigwedge}^{\mu_{\ell}} V = \bigoplus_{\lambda: \ell(\lambda)
  \leq k} (\SS^\lambda V)^{\oplus K_{\lambda',\mu}} .
\]
Here $K_{\lambda',\mu}$ is the number of column strict tableaux of
shape $\lambda'$ (the conjugate partition of $\lambda$) that contain
$\mu_i$ $i$'s. From this, one easily deduces the following.
\begin{theorem}\label{thm:motivation}
  The tensor algebra $T(\bigwedge V)$ has a basis indexed by pairs of
  tableaux $(T_r,T_c)$ of the same shape where $T_r$ has strictly
  increasing rows, weakly increasing columns and entries in $[\dim V]$
  and $T_c$ is column strict with arbitrary entries.
\end{theorem}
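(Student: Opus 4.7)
The plan is to apply the displayed Schur-functor decomposition one tensor power at a time and then collect isotypic components. Writing
\[
T(\bigwedge V) \;=\; \bigoplus_{n \ge 0}\; \bigoplus_{\mu}\; \bigwedge^{\mu_1} V \otimes \cdots \otimes \bigwedge^{\mu_n} V
\]
with the inner sum over compositions $\mu = (\mu_1,\dots,\mu_n)$ of positive parts, and substituting the formula in the excerpt to rewrite each summand as $\bigoplus_\lambda (\SS^\lambda V)^{\oplus K_{\lambda',\mu}}$, I obtain
\[
T(\bigwedge V) \;\cong\; \bigoplus_\lambda (\SS^\lambda V)^{\oplus M_\lambda},
\qquad M_\lambda = \sum_{n,\mu} K_{\lambda',\mu}.
\]

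Then I would pick bases combinatorially. By the Weyl character formula recalled in the excerpt, $\SS^\lambda V$ has a basis indexed by column strict tableaux $P$ of shape $\lambda$ with entries in $[\dim V]$. By definition $K_{\lambda',\mu}$ is the number of column strict tableaux $Q$ of shape $\lambda'$ with content $\mu$. As $(n,\mu)$ ranges over admissible data, $Q$ ranges over all column strict tableaux of shape $\lambda'$ with arbitrary positive integer entries, and the pair $(n,\mu)$ is recovered from $Q$ as its content and length. Hence a basis of $T(\bigwedge V)$ is indexed by pairs $(P,Q)$ with $P$ as above and $Q$ column strict of shape $\lambda'$ with arbitrary entries.

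To finish, I would transpose $P$ to place both tableaux on the common shape $\lambda'$: the transpose $T_r := P^t$ has strictly increasing rows (from the strict columns of $P$), weakly increasing columns (from the weak rows of $P$), entries in $[\dim V]$, and shape $\lambda'$; setting $T_c := Q$ matches the theorem's description exactly. The step I expect to require the most care is the middle one, where I must check that summing Kostka numbers over all $(n,\mu)$ counts each column strict tableau of shape $\lambda'$ exactly once; this is handled by restricting to compositions with strictly positive parts, which forces the length $n$ to equal the largest entry of $Q$ and rules out over-counting coming from trailing zeros.
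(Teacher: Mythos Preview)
Your proposal is correct and follows exactly the route the paper indicates: decompose each tensor power via Young's rule, use the Weyl dimension formula for the Schur modules, reinterpret the resulting Kostka-number multiplicity as a count of column strict tableaux, and then transpose the Schur-module index to obtain the row-strict tableau $T_r$. This is precisely the ``easily deduces'' step the paper leaves implicit before stating Theorem~\ref{thm:motivation}, and the paper later makes the basis explicit via the standard basis theorem for $\Super(L^-|P^+)$ rather than by a separate argument.

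One small remark on your final paragraph: restricting to compositions $\mu$ with strictly positive parts forces every entry $1,\dots,n$ to appear in $Q$, so the $T_c$ you obtain use an \emph{initial segment} of the positive integers rather than truly ``arbitrary entries.'' This is not a defect in your argument so much as a reflection of the fact that the statement is slightly informal about the trailing $\otimes\,1$ factors; the precise object with the stated basis is the quotient $\Super(L^-|P^+)$ of $T(\bigwedge V)$ (cf.\ Proposition~\ref{prop:super-tensor} and Theorem~\ref{thm:std basis}), where those trailing units have been collapsed and $T_c$ genuinely ranges over all column strict fillings.
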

The Whitney module of a matroid $M$, $W(M)$, will be a quotient of a
certain letter-place algebra that mimics $T(\bigwedge V)$ , but takes
into account the dependencies of the matroid $M$. Its definition is
slightly more natural than the closely related Whitney algebra of a
matroid, which was defined by Crapo, Rota and Schmitt in
\cite{crapo-schmitt}. In the final section of \cite{crapo-schmitt} the
Whitney module of a matroid is roughly described in passing. The goal
of this paper is to begin to investigate how tableaux play a role in
describing the structure of $W(M)$. Our main result is that the
obvious spanning set of $W(M)$ is a basis when $M$ is a
\textit{freedom matroid} (also known as Schubert, or shifted
matroids). We will elaborate on and prove the following result.

\begin{theorem}
  Let $M$ be a freedom matroid on $n$ elements. There is a basis for
  its Whitney module indexed by pairs of tableaux $(T_r,T_c)$ where
  $T_r$ and $T_c$ have the same shape and
  \begin{enumerate}
  \item $T_r$ is row strict with entries in $[n]$,
  \item every row of $T_r$ indexes an independent set of $M$, and
  \item $T_c$ is column strict.
  \end{enumerate}
\end{theorem}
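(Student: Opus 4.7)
The plan is to establish the theorem by proving spanning and linear independence of the claimed set of tableau pairs in $W(M)$ separately. Since $W(M)$ is defined as a quotient of a letter-place algebra whose ambient structure is described in Theorem~\ref{thm:motivation}, the two steps rely on different inputs: spanning follows from analyzing the matroid relations imposed in the quotient, while independence requires a direct construction tailored to the freedom hypothesis.

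For spanning, I would begin by observing that, in the ambient tensor-of-exteriors, the broader class of row-strict pairs (without any column condition on $T_r$) still spans, even though it is not a basis there. In passing to $W(M)$, the matroid relations provide a further simplification: any row of $T_r$ that spans a dependent set can be straightened using the circuit relation it contains, producing a linear combination of tableaux whose rows are each smaller in some well-chosen term order. Iterating this procedure terminates and yields pairs in which every row is independent. The freedom matroid hypothesis is crucial at this stage because circuits of a freedom matroid have an especially rigid form, essentially determined by the shifted structure of the bases, so the straightening algorithm terminates in a controlled way rather than unfolding into uncontrolled expansions.

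For linear independence, I would exploit the fact that freedom matroids are representable over $\CC$, fix a suitable realization of $M$, and use it to define a map from $W(M)$ into the corresponding piece of $T(\bigwedge V)$. Evaluating the proposed tableau pairs on this realization should yield images that are provably linearly independent in the target by matching them against the basis of Theorem~\ref{thm:motivation}. An alternative route is to induct on the construction of $M$ via free extensions and the addition of parallel elements or loops, checking at each step that a basis of the claimed form persists under these operations. This linear independence step is the main obstacle: spanning is a combinatorial straightening argument that, once the right term order is chosen, should be manageable given the clean circuit structure of freedom matroids, but showing that the $\CC$-span of the claimed tableau pairs has full dimension requires either a dimension formula for $W(M)$ matching the tableau count or a faithful realization argument, and either route relies essentially on the freedom hypothesis, since for general matroids $W(M)$ is not fully understood and the obvious spanning set can fail to be independent.
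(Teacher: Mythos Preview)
Your spanning argument misidentifies where the work lies. In $W(M)$, a tableau $\tab(T_r|T_c)$ with a dependent row is not merely reducible via some straightening procedure: it is identically zero, since it is a product containing a factor $(w|p^{(\alpha)})$ with $w$ a dependent word, and this factor lies in the defining ideal. Combined with the standard basis theorem for $\Super(L^-|P^+)$, this shows at once that standard tableaux with all rows independent span $W(M)$ for \emph{any} matroid $M$. The freedom hypothesis plays no role in spanning, so the sentence ``the freedom matroid hypothesis is crucial at this stage'' is misplaced.

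The entire content of the theorem is freeness and linear independence, and here both of your proposed routes have gaps. The realization route does not suffice: knowing that each individual $\tab(T_r|T_c)$ has nonzero image (which is essentially Theorem~\ref{thm:gamas}) does not establish linear independence of the collection, and the images under a realization are generally not themselves standard tableaux, so they cannot be matched against the basis of Theorem~\ref{thm:motivation}. Your inductive route is closer in spirit, but the operations you name (``free extensions and the addition of parallel elements or loops'') are not the ones from which freedom matroids are built. The paper inducts on the two operations that do build them: adding a coloop, $M \mapsto M \oplus e_{n+1}$, and truncation, $M \mapsto T(M)$. For the coloop step one proves a graded-algebra isomorphism $W(M\oplus N)\approx W(M)\otimes W(N)$ and then invokes the super RSK correspondence to check that the tensor product of the two bases bijects with the claimed basis of $W(M\oplus e_{n+1})$. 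For truncation one shows that $W(T(M))$ is the quotient of $W(M)$ by the ideal generated by tableaux whose first row has length $r(M)$, and the dominance-order clause of the standard basis theorem (Theorem~\ref{thm:std basis}) implies this ideal is already spanned by the \emph{standard} such tableaux, so the quotient remains free with the expected basis. Neither lemma appears in your plan, and without them the inductive step cannot be carried out.
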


We will also precisely state and prove the following result.
\begin{theorem}
  In the complexified doubly multilinear submodule of the Whitney
  module of $M$, a basis for the hook shaped isotypic components are
  determined by the no broken circuit complex of $M$.
\end{theorem}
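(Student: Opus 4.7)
The plan is to reduce the statement to a generating-function identity between hook multiplicities and NBC counts, and then to exhibit an explicit triangular basis indexed by NBC sets.

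First, I would unpack the doubly multilinear submodule $W^{\mathrm{mult}}(M) \subseteq W(M) \otimes \CC$: this is the subspace in which each of the $n$ row letters and each of the $n$ column letters appears exactly once. It carries an $\S_n \times \S_n$ action by permuting letters, and the theorem concerns hook-shape isotypic components for (say) the row action. Theorem \ref{thm:motivation}, adapted to the quotient $W(M)$, gives a spanning set by pairs of tableaux; restricting to multilinear content gives a spanning set of $W^{\mathrm{mult}}(M)$ on which the row $\S_n$ acts by permuting entries of $T_r$.

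Second, I would encode hook multiplicities by the polynomial
\[
f_M(q) = \sum_{k \geq 0} q^k \, m_{(n-k,1^k)}\bigl(W^{\mathrm{mult}}(M)\bigr),
\]
and argue that it coincides, up to normalization, with the NBC generating polynomial of $M$. The character of the hook Specht module $S^{(n-k,1^k)}$ is supported on cycle types with a single non-trivial cycle; on an $n$-cycle its value is $(-1)^k$. Hence the trace of a long cycle on $W^{\mathrm{mult}}(M)$ equals $f_M(-1)$, and evaluation against the class sums of $j$-cycles for smaller $j$ pins down each coefficient of $f_M$ by M\"obius inversion. On the letter-place side, each such trace forces partial antisymmetry that collapses the multilinear tableaux spanning set modulo the circuit relations, producing a broken-circuit-type quotient whose Hilbert series is the known NBC generating polynomial of $M$.

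Third, with dimensions matched, I would construct an explicit basis of each hook isotypic component indexed by NBC sets. For an NBC set $B$ of the appropriate cardinality, the associated element is obtained by antisymmetrizing a distinguished multilinear tableau whose leg carries $B$ in increasing order; broken-circuit triangularity then yields both spanning (via straightening) and linear independence (via a unique leading term in the broken-circuit order).

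The principal obstacle is the straightening step: rewriting an antisymmetrization supported on a set containing a broken circuit as a combination of NBC antisymmetrizations, using only the circuit relations of $W(M)$. This is a Whitney-module analogue of the classical Orlik--Solomon straightening; the delicate point is to verify that the letter-place relations interact with the broken-circuit reductions in the upper-triangular manner required, so that the counting argument from the second step gets promoted to an actual basis rather than just a dimension match.
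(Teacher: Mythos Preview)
Your proposal has a genuine gap in step 2, and the overall architecture is different from the paper's.

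The character-theoretic counting argument is circular as stated. To compute the trace of a long cycle (or any cycle type) on $U(M)$ you need some handle on $U(M)$ as a vector space, but $U(M)$ is defined as a \emph{quotient} and you do not yet have a basis for it; the spanning set of multilinear tableaux is generally redundant, and you cannot read off a trace from a spanning set. Your sentence ``each such trace forces partial antisymmetry that collapses the multilinear tableaux spanning set modulo the circuit relations, producing a broken-circuit-type quotient whose Hilbert series is the known NBC generating polynomial'' is exactly the statement to be proved, not an independent computation. So step 2 does not provide the dimension count you want to feed into step 3. (There is also a small framing issue: the paper's $U(M)$ carries a single right $\S_n$-action by permuting places, not an $\S_n\times\S_n$-action; the letters are the fixed ground set of $M$.)

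The paper takes a different and more direct route, with no character theory. It first identifies $U(M)$ with the right $\CC\S_n$-module $\CC\S_n/\langle b_D : D\text{ dependent}\rangle$, so the $\lambda^{r(M)}$-isotypic piece becomes the quotient of right ideals $\langle c_B : |B|=r(M)\rangle / \langle c_D : D\text{ dependent}\rangle$. The key algebraic input is the single-exchange identity $c_S=\sum_{f\in S} c_{S\cup e - f}(ef)$ for hook Young symmetrizers, proved from $b_{S\cup e}c_S=0$. Spanning by NBC symmetrizers then follows by inducting on the number of broken circuits in $B$. Linear independence is obtained by a Las~Vergnas--Forge style reduction: a sequence of claims shows that every $c_D$ with $D$ dependent can be pushed into the span of symmetrizers $c_{D'}$ with $1\in D'$, and that the residual contributions involving $1$ are supported on \emph{broken-circuit} bases containing $1$, hence disjoint from the NBC ones. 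Since Young symmetrizers of distinct standard tableaux generate independent right ideals, the intersection is zero.

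Your step 3 is in the right spirit and is where the actual content lies, but you have flagged the straightening as the ``principal obstacle'' and left it unresolved. The paper's contribution is precisely the concrete straightening mechanism above; if you drop step 2 and carry out step 3 via the exchange identity and the $1\in D$ filtration, you recover the paper's proof.
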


The paper is organized as follows. First we define the super algebra
$\Super(L^-|P^+)$ and recall the standard basis theorem of
Groshhans--Rota--Stein \cite{grs}.  We then define the Whitney algebra
and Whitney module of a matroid and see that the latter is spanned by
certain elements indexed by pairs of tableaux, as in the standard basis
theorem, with the additional condition that the rows of the row strict
tableau index independent subsets of the matroid. We will then define
freedom matroids and prove that the given spanning set for its Whitney
module forms a basis. Following that, we will define the doubly
multilinear submodule of the Whitney module and describe an action of
the symmetric group on it. After complexifing this submodule, we will
give a formula for the multiplicity of irreducible symmetric group
modules indexed by hook shapes. The formula will be in terms of the
certain no broken circuit subsets of $M$.

\section{Letter-Place Algebras}
In this section we show how to view the tensor algebra of the exterior
algebra of a finite set as a letter-place superalgebra. All of the
definitions come from \cite{grs} and we use the main result there, the
standard basis theorem, to describe a combinatorial basis of this
object in terms of pairs of tableaux. This gives a concrete
explanation of our motivating result Theorem~\ref{thm:motivation}.

\subsection{Exterior Algebra}
Let $E$ be a finite set and $\bigwedge E$ be the exterior algebra of
the free $\ZZ$-module with basis $E$. We will write decomposable
elements as $w = e_{i_1} e_{i_2} \dots e_{i_k}$, to avoid notational
clutter. This is a graded commutative algebra, which means that
$\bigwedge E$ is the direct of the $i$-fold exterior products
\[
\bigwedge E = \bigoplus_{0 \leq i } {\bigwedge}^i E
\]
and if $w \in \bigwedge^i E$ and $w' \in \bigwedge^j E$ then
\[
ww' = (-1)^{ij} w'w.
\]
If $w \in \bigwedge E$ is homogeneous then we denote the degree of the
piece which $w$ is in by $|w|$.  The tensor algebra of the graded
algebra $\bigwedge E$ is the direct sum of the tensor products
$T^1(\bigwedge E)$, $T^2(\bigwedge E)$, \dots.  Each of the summands
has its own product, called the internal product, which is induced by
the rule
\[
(w \otimes w') \times (u \otimes u') = (-1)^{|w'||u|}(wu \otimes w'u').
\]

The exterior algebra of $E$ is a graded commutative Hopf algebra, with
coproduct
\[
\delta:
\bigwedge E \to \bigwedge E \otimes \bigwedge E
\]
induced by the rule $\delta(e) = 1\otimes e + e \otimes 1$. We will
not need the definitions of the counit or antipode here.
Given an element $w \in \bigwedge E$ we write its coproduct using
Sweedler notation
\[
\delta(w) = \sum_{w} w_{(1)} \otimes w_{(2)}.
\]
The iterated coproduct $\delta^{(n)}: \bigwedge E \to T^n(\bigwedge
E)$ is defined by the conditions that $\delta^{(1)}$ equals the
identity map, $\delta^{(2)} = \delta$ and $\delta^{(n)} = (\delta
\otimes 1) \circ \delta^{(n-1)}$. The iterated coproduct
$\delta^{(n)}$ is the sum of its homogeneous pieces
\[
\delta^{(\alpha)} : \bigwedge E \to ({\bigwedge}^{\alpha_1} E) \otimes
({\bigwedge}^{\alpha_2} E) \otimes \dots \otimes
({\bigwedge}^{\alpha_n} E)
\]
where $\alpha = (\alpha_1,\alpha_2,\dots,\alpha_n)$ is a composition
with $n$ parts. The image $w$ under $\delta^{(\alpha)}$ is the
$\alpha$-th coproduct slice of $w$.


\subsection{Letter-Place Algebras}
The goal of this section is to view $T(\bigwedge E)$ as one of the
letter-place algebras of Grosshans--Rota--Stein \cite{grs}. This will
be done by constructing two algebras, one of non-commutative letters
the other of commutative places, and from these defining a new algebra
of letter-place pairs.

We will not review the complete definition of the letter-place
algebras, since this some amount of work to do precisely. Instead, we
will take those pieces of the definitions suited for our needs,
hinting at the form of the general definitions.

In this section we will declare the elements of $E$ to be negatively
signed and refer to them as \textit{negative letters}. To emphasize
that we are viewing $E$ in this way, we will denote it by $L^-$ (or
sometimes $L^-_E$ when we must emphasize the set $E$).  Let $P^+ =
\{p_1,p_2,p_3,\dots\}$ denote the infinite set of \textit{positively
  signed places}. We associate to $L^-$ and $P^+$ two algebras, the
exterior algebra and the divided power algebra, respectively, and
associate to these the so-called letter-place algebra. Will write the
exterior algebra of $L^-$ as $\Super(L^-)$, which is constructed
exactly as in the previous section.

We recall the definition of the divided power algebra. Associate
to each element $p \in P^+$ an infinite sequence of \textit{divided
  powers} $$p^{(0)},p^{(1)}, p^{(2)},\dots.$$ We let $\ZZ\<P^+_d\>$
denote the free algebra generated by the divided powers.  Let
$\Super(P^+)$ be the quotient of $\ZZ\<P^+_d\>$ by the two-sided ideal
generated by the elements of the form
\[
p^{(j)} p^{(k)} - \binom{j+k}{k} p^{(j+k)}, \quad p^{(j)}q^{(k)} -
q^{(k)}p^{(j)},\quad p^{(0)} - 1,
\]
for any $p,q \in P^+$.  The elements $p^{(j)}$ are meant to behave
like $p^i/i!$ in a symmetric algebra. We can endow $\Super(P^+)$ with
a coalgebra structure by defining the coproduct of $p^{(j)}$, $p \in
P$ as
\[
\delta(p^{(j)}) = 1 \otimes p^{(j)} + p^{(1)} \otimes p^{(j-1)} +
\dots + p^{(j-1)} \otimes p^{(1)} + p^{(j)} \otimes 1.
\]
As before we write the coproduct of an arbitrary element of $q \in
\Super(P^+)$ in Sweedler notation,
\[
\delta(q) = \sum_q q_{(1)} \otimes q_{(2)}.
\]
It is clear that $\Super(P^+)$ is graded and commutative in the usual
sense of commutative algebra.

Finally we are in a positive to define the letter-place algebra
$\Super(L^-|P^+)$. Let $(L^-|P^+)$ denote the set of letter-place
pairs:
\[ (L^-|P^+) = \{ (e|p) : e \in L^-, p \in P^+\}.
\]
Since $L^-$ consists of negatively signed variables and $P^+$ consists
of positively signed variables, we declare the letter-place pairs to
be negatively signed. We define $\Super(L^-|P^+)$ to be the exterior
algebra of the set $(L^-|P^+)$. 

To describe a standard basis of $\Super(L^-|P^+)$ we define a certain
bilinear map
\[
\Omega:\Super(L^-) \times \Super(P^+) \to \Super(L^-|P^+),
\]
called the \textit{Laplace pairing}, according to a sequence of rules.
\begin{itemize}
\item[R1.] $\Omega(1,1) = 1$.
\item[R2.] If $e \in L^-$ and $p \in P^+$ then $\Omega(e,p^{(1)}) =
  (e|p)$.
\item[R3.] If $w$ and $p$ are not in the same graded piece of
  $\Super(L^-)$ and $\Super(P^+)$, respectively, then $\Omega(w,p) =
  0$.
\item[R4.] If $\delta(p) = \sum_{(p)} p_{(1)} \otimes p_{(2)}$ then
  $$\Omega(ww',p) = \sum_{(p)} \Omega(w,p_{(1)})\Omega(w',p_{(2)}).$$
\item[R$4'$.] If $\delta(w) = \sum_{(w)} w_{(1)} \otimes w_{(2)}$ then
  $$\Omega(w,pp') = \sum_{(w)} \Omega(w_{(1)},p)\Omega(w_{(2)},p').$$
\end{itemize}
That the rules R4 and R$4'$ are equivalent follows from a series of
technical checks, which was done \cite{grs}. There they give a more
general definition of the Laplace pairing was given that includes the
possibility of both positively and negatively signed letters and
places. In the future we will denote the Laplace pairing of $w$ and
$q$ by $(w|q)$, so that elements of the form $(efg|p_1^{(2)}p_2)$ make
sense.

\begin{proposition}\label{prop:super-tensor}
  Viewing $T(\bigwedge E)$ as a $\ZZ$-algebra with the internal
  product, there is a surjection of $\ZZ$-algebras
  \[
  T(\bigwedge E) \to \Super(L^-,P^+),
  \]
  that maps a tensor $w_1 \otimes w_2 \otimes \dots \otimes w_n$ to
  the product
  \[
  (w_1|p_1^{|w_1|})(w_2|p_2^{|w_2|}) \dots (w_n|p_1^{|w_n|}).
  \]
\end{proposition}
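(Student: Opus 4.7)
The plan is to verify three things: that the formula extends to a well-defined $\ZZ$-linear map on each $T^n(\bigwedge E)$, that it intertwines the internal product on $T(\bigwedge E)$ with the ordinary product in $\Super(L^-|P^+)$, and that it is surjective. Reading the apparent typo $p_1^{|w_n|}$ as $p_n^{|w_n|}$, well-definedness is automatic: the Laplace pairing is bilinear in each argument, so each factor $(w_i|p_i^{(|w_i|)})$ is linear in $w_i$, the assignment is multilinear in $(w_1,\ldots,w_n)$, and hence extends $\ZZ$-linearly to $T^n(\bigwedge E)$.

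The crux of the proof is the homomorphism property, which on $T^2(\bigwedge E)$ reduces to establishing
\[
(w|p_1^{(|w|)})(w'|p_2^{(|w'|)})(u|p_1^{(|u|)})(u'|p_2^{(|u'|)}) = (-1)^{|w'||u|}(wu|p_1^{(|w|+|u|)})(w'u'|p_2^{(|w'|+|u'|)}).
\]
I would argue this in two moves. First, because letter-place pairs are negatively signed, a product of $k$ of them has super-parity $k \bmod 2$; by an easy induction using R4 and R3, the element $(w|p^{(k)})$ is precisely such a product whenever $|w|=k$, so $(w'|p_2^{(|w'|)})$ and $(u|p_1^{(|u|)})$ commute up to the sign $(-1)^{|w'||u|}$. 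This is exactly the sign demanded by the internal product. Second, the two factors sharing place $p_1$ merge via rule R4 applied to $p_1^{(|w|+|u|)}$: the coproduct $\sum_{i+j=|w|+|u|} p_1^{(i)}\otimes p_1^{(j)}$ contributes only the summand $i=|w|$, $j=|u|$ by the graded-piece restriction R3, giving $(wu|p_1^{(|w|+|u|)}) = (w|p_1^{(|w|)})(u|p_1^{(|u|)})$; the same holds at $p_2$. For the general case, two simple tensors in $T^n(\bigwedge E)$ are handled by performing the same commutation and place-merging in each of the $n$ slots.

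Surjectivity is then straightforward: any monomial in $\Super(L^-|P^+)$ is a product of letter-place pairs, and super anti-commutation among pairs with distinct places allows us to sort factors so that place indices appear in increasing order, at the cost of a unit sign. Running R4 backwards consolidates each place-group into a single factor $(w_i|p_i^{(|w_i|)})$, with $w_i = 1 \in {\bigwedge}^0 E$ whenever no letter pairs with $p_i$, exhibiting the monomial as the image of $w_1 \otimes \cdots \otimes w_n$ up to a unit sign. I expect the main obstacle to be the sign bookkeeping in step two: one must be confident that the super-degree of $(w|p^{(k)})$ really is $k$ in the convention of \cite{grs}, and that the sign produced by super-commutation in the codomain matches the one introduced by the internal product on the domain.
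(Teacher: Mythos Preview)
The paper does not actually prove this proposition; it states it and follows it only with an illustrative example computing the image of $e\otimes e\otimes f$ and of $ef+fe$. Your argument supplies the missing details and is correct.

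One small imprecision worth tightening: saying ``each factor $(w_i|p_i^{(|w_i|)})$ is linear in $w_i$'' glosses over the fact that the exponent in the place depends on the degree of $w_i$. The clean phrasing is that the map is defined on multi-homogeneous simple tensors $w_1\otimes\cdots\otimes w_n$ (where each $w_i\in{\bigwedge}^{|w_i|}E$) by the stated formula, and then extended $\ZZ$-linearly using the grading on $T^n(\bigwedge E)$. Your core computation---that rules R3 and R4 together force
\[
(wu\,|\,p_i^{(|w|+|u|)}) = (w\,|\,p_i^{(|w|)})(u\,|\,p_i^{(|u|)}),
\]
and that super-commuting the middle factors in the target produces exactly the sign $(-1)^{|w'||u|}$ demanded by the internal product on the source---is precisely what is needed, and the induction on $|w|$ showing $(w|p^{(|w|)})$ has super-degree $|w|$ is straightforward. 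Your surjectivity argument (sort letter-place pairs by place index using graded commutativity, then consolidate each place-block via R4, inserting $w_i=1$ for missing places) is also correct; this is essentially what the paper's example of the preimage of $(e|p_2)$ is hinting at.
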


\begin{example}
  Suppose that $e,f \in E = L^-$. Then
  \[
  e \otimes e \otimes f \mapsto (e|p_1)(e|p_2)(f|p_3).
  \]
  We can verify that $ ef + fe \mapsto 0$. We know that
  $$\delta(p_1^{(2)}) = 1 \otimes p_1^{(2)} + p_1^{(1)} \otimes
  p_1^{(1)} + p_1^{(2)} \otimes 1,$$ hence
  \[
  ef \mapsto (e|1)(f|p_1^{(2)}) + (e|p_1)(f|p_1) + (e|p_1^{(2)})(f|1)
  = (e|p_1)(f|p_1) = -(f|p_1)(e|p_1),
  \]
  since $\Super(L^-|P^+)$ is an exterior algebra. 

  The preimage of $(e|p_2)$ consists of elements of the form 
  $$
  1  \otimes e \otimes 1 \otimes \dots \otimes 1
  $$
  since, e.g.,
  \[
  1 \otimes e \mapsto (1|p_1^{(0)})(e|p_2) = (1|p_1^{(0)})(e|p_2) =
  (e|p_2).
  \]
\end{example}

Given a composition $\alpha = (\alpha_1,\dots,\alpha_n)$ we define
\[
p^{(\alpha)} := p_1^{(\alpha_1)} p_2^{(\alpha_2)}\dots p_n^{(\alpha_n)}
\]
\begin{proposition}\label{prop:coproduct-slice}
  Let $w \in \bigwedge E$ be a decomposable element. The image of the
  coproduct slice $\delta^{(\alpha)} (w)$ in $\Super(L^-|P^+)$ is the
  Laplace pairing $(w|p^{(\alpha)})$.
\end{proposition}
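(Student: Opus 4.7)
The plan is to prove the identity by induction on the number of parts $n$ of the composition $\alpha$, using rule R$4'$ to peel off one divided power $p_i^{(\alpha_i)}$ at a time from the pairing $(w|p^{(\alpha)})$ and rule R3 to force the degrees of the coproduct components to match the parts of $\alpha$.

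For the base case $n=1$, we have $\delta^{(1)}(w) = w$ by definition, and $w$ is homogeneous of degree $|w|=k$ since it is a decomposable word $e_{i_1}\cdots e_{i_k}$. On the one hand, its image under the map of Proposition~\ref{prop:super-tensor} is $(w|p_1^{(k)})$. On the other hand, by R3 the Laplace pairing $(w|p_1^{(\alpha_1)})$ vanishes unless $\alpha_1=k$, and in that case the slice $\delta^{(\alpha)}(w)$ is exactly $w$, so both sides agree.

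For the inductive step, write $p^{(\alpha)} = p_1^{(\alpha_1)}\,q$ with $q = p_2^{(\alpha_2)}\cdots p_n^{(\alpha_n)}$ and apply R$4'$:
\[
(w|p^{(\alpha)}) \;=\; \sum_{(w)} (w_{(1)}|p_1^{(\alpha_1)})\,(w_{(2)}|q).
\]
By R3 the term $(w_{(1)}|p_1^{(\alpha_1)})$ is zero unless $|w_{(1)}|=\alpha_1$, so after applying the inductive hypothesis to $(w_{(2)}|q)$ (a pairing against a composition of length $n-1$) and invoking coassociativity of the iterated coproduct $\delta^{(n)} = (1\otimes\delta^{(n-1)})\circ \delta$, the resulting double sum collapses to a single sum indexed by the summands $w_{(1)}\otimes\cdots\otimes w_{(n)}$ of $\delta^{(\alpha)}(w)$, with each summand contributing $(w_{(1)}|p_1^{(\alpha_1)})\cdots(w_{(n)}|p_n^{(\alpha_n)})$. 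By Proposition~\ref{prop:super-tensor}, this is precisely the image of $\delta^{(\alpha)}(w)$ in $\Super(L^-|P^+)$.

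The main obstacle to worry about is sign bookkeeping: $\Super(L^-|P^+)$ is an exterior algebra on negatively signed pairs, so reordering letter–place factors introduces signs. The point that saves us is that $\Super(P^+)$ is commutative in the ordinary sense, so $\delta(p_1^{(\alpha_1)}\cdots p_n^{(\alpha_n)})$ factors as a tensor product of the individual coproducts $\delta(p_i^{(\alpha_i)})$ with no extra signs, and the iterated application of R$4'$ produces the factors $(w_{(i)}|p_i^{(\alpha_i)})$ in their natural left-to-right order—matching the order used in Proposition~\ref{prop:super-tensor}. Hence no sign discrepancy arises, and the inductive identification goes through.
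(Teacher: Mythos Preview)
Your argument is correct but proceeds along the dual axis from the paper's. The paper inducts on $|w|$: the base case is a single letter $w=e$, and for the step one factors $w=w'w''$, uses multiplicativity of the coproduct to write $\delta^{(\alpha)}(w'w'')=\sum_{\beta+\gamma=\alpha}\delta^{(\beta)}(w')\,\delta^{(\gamma)}(w'')$, applies the inductive hypothesis to each factor, and then collapses the sum via rule R4 (using $\delta(p^{(\alpha)})=\sum_{\beta+\gamma=\alpha}p^{(\beta)}\otimes p^{(\gamma)}$). You instead induct on the number of parts of $\alpha$ and invoke the companion rule R$4'$, splitting on the letter side rather than the place side. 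The two proofs are mirror images of one another, reflecting the stated equivalence of R4 and R$4'$; neither is shorter, but the paper's version has the small advantage that the base case $(e|p^{(\alpha)})$ is immediate from R2 and R3 without appealing to Proposition~\ref{prop:super-tensor}.

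One technical wrinkle to tidy: when you apply the inductive hypothesis to $(w_{(2)}|q)$ with $q=p_2^{(\alpha_2)}\cdots p_n^{(\alpha_n)}$, the places involved are $p_2,\ldots,p_n$, whereas the proposition as stated fixes $p^{(\alpha)}=p_1^{(\alpha_1)}p_2^{(\alpha_2)}\cdots$. So the literal inductive hypothesis does not apply. The fix is routine---either phrase the induction for an arbitrary increasing sequence of places, or observe that the argument is manifestly insensitive to relabelling places---but you should say so explicitly.
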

\begin{proof}
  The result is easy to verify if $w = e \in L^-$. If $|w|>1$ then we
  may write $w = w'w''$. By the homogeneity of the coproduct and
  induction we have
  \[
  \delta(w'w'') = \sum_{\beta+\gamma = \alpha}\delta^{(\beta)}(w')
  \delta^{(\gamma)}(w'') \mapsto \sum_{\beta+\gamma = \alpha}
  (w'|p^{(\beta)})(w''| p^{(\gamma)}).
  \]
  Since we have $\delta( p^{(\alpha)}) = \sum_{\beta+ \gamma = \alpha}
  p^{(\beta)} \otimes p^{(\gamma)}$, we can rule R4 to write this as
  $(w'w''| p^{(\alpha)})$.
\end{proof}

\subsection{The Standard Basis Theorem}\label{sec:std basis}
From the computation of the $\GL(V)$-module structure of $T(\bigwedge
V)$ in the introduction, we expect $\Super(L^-|P^+)$ to have a basis
indexed by pairs of tableaux of the same shape where one is row strict
and the other is column strict. This is the case, and in this section
we recall how to construct this basis.

Let $\lambda$ be a decreasing sequence of non-negative integers;
\textit{a partition}. The \textit{length} of $\lambda$ is the number
of positive integers in the sequence. We will identify $\lambda$ with
its \textit{Young frame}, which is a collection of boxes, north-east
justified, the number of boxes in the $i$-th row being equal to
$\lambda_i$. Denote the total number of boxes in the Young frame of
$\lambda$ by $|\lambda|$. A \textit{tableau} $T$ is a filling of the
elements of $A$ into the boxes of a partition $\lambda$. If $T$ is a
tableau we will call the partition $\lambda$ the \textit{shape} of $T$
and write $sh(T)$. For example
\[
\young(32542,3332,12,12)
\]
is a tableau on $\{1,2,3,4,5\}$ whose shape is $(5,4,2,2)$. The
\textit{content} of a tableau is the number of $1$'s, the number of
$2$'s, \dots that appear in the filling. We will write the content of
a tableau as a composition whose $i$-th part is the number of $i$'s in
the filling of the tableau. Thus the tableau above content
$(2,5,4,1,1,0,0,\dots)$. A \textit{column strict tableau} is a tableau
where the numbers in each row weakly increase and the numbers in each
column strictly increase. A \textit{row strict tableau} is a tableau
where the numbers in each column weakly increase and the numbers in
each row strictly increase. We will call a tableaux $T$ a standard
Young tableaux if it is both row and column strict and has entries in
$|sh(T)|$.

Let $T$ and $S$ be tableau of the same shape $\lambda$ and length
$\ell$. Let the numbers in the $i$-th row of $T$ be
$t_1,\dots,t_{\lambda_i}$, in order. Define $w_i$ to be the product in
$\Super(L^-)$ of the elements indexed by $t_1, \dots, t_{\lambda_i}$,
i.e., $w_i = e_{t_1} \dots e_{t_{\lambda_i}}$. Let
$s_1,\dots,s_{\lambda_i}$ be the elements in the $i$-th row of $S$, in
order. Define $q_i$ to be the product in $\Super(P^+)$ of the elements
indexed by $s_1,\dots,s_{\lambda_i}$, where if $s_{j} = \dots =
s_{j+k-1}$ is a maximal string of equal entries then we take
$p_{s_j}^{(k)}$ instead of the product $p_{s_j} \dots
p_{s_{j+k-1}}$. For example, if
\[
T = \young(1234,123,3) \quad S = \young(3343,555,6)
\]
then
\[
w_1 =w_2= e_1e_2e_3,\quad w_2 = e_3, \qquad\qquad q_1 =
p_3^{(2)}p_4^{(1)}p_3,\quad q_2 = p_5^{(3)}, \quad q_3 = p_6^{(1)}.
\]
We $\tab(T|S)$ by the formula
\[
\tab(T|S) = (w_1|q_1)(w_2|q_2) \dots (w_\ell|q_\ell) \in
\Super(L^-|P^+),
\]
which makes sense according to our definition of the Laplace
pairing. We call such an element a tableaux in $\Super(L^-|P^+)$.  We
are finally in a position to state the main result of this section.
\begin{theorem}[Grosshans--Rota--Stein {\cite{grs}}]\label{thm:std basis}
  The elements $\tab(T_r|T_c) \in \Super(L^-|P^+)$ where 
  \begin{enumerate}
  \item $T_r$ and $T_c$ are tableaux of the same shape,
  \item $T_r$ is row strict with entries in $[n]$,
  \item $T_c$ is column strict,
  \end{enumerate}
  form a basis for the free module $\Super(L^-|P^+)$. We will call
  such tableaux \textit{standard}.

  In the expansion of $\tab(T|S)$ as a sum of standard tableaux
  $\sum_i c_i\tab(T_i|S_i)$ we have that the shape of each $T_i$ is
  larger than or equal to the shape of $T$, in dominance
  order. Further, the content of every $T_i$ is equal to the content
  of $T$ and the content of every $S_i$ is equal to the content of
  $S$.
\end{theorem}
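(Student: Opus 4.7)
The proof naturally splits into two parts: showing that the proposed standard tableaux span, and showing that they are linearly independent. The structural statements about dominance order and contents should fall out of the spanning argument as a byproduct of how the straightening algorithm is constructed.

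For spanning, the plan is a \emph{straightening algorithm}. Given $\tab(T|S)$, there are two kinds of failure of standardness: a row of $T$ that is not strictly increasing, or a column of $S$ that is not strictly increasing. The first kind is cheap: because each row $w_i$ of $T$ lives in $\Super(L^-)$, which is an exterior algebra on $L^-$, swapping two adjacent letters in a row produces a sign, and a repeated letter kills the row entirely. So we may as well assume each row of $T$ is strictly increasing. The real work is the column violations on the place side. For these I would extract the appropriate \emph{Laplace/Pl\"ucker syzygy} from Rule R4 and R$4'$: fixing a pair of consecutive rows $(w_i|q_i)(w_{i+1}|q_{i+1})$ in which the column violation occurs, choose the subword of letters in rows $i$ and $i+1$ sitting on or above the violating column, call it $v$, and expand $\Omega(vv', p)$ using R4 in two ways: as given, and via the shuffle sum $\sum_{(v)} \Omega(v_{(1)}, p_{(1)}) \Omega(v_{(2)}, p_{(2)})$. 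Subtracting yields an identity that rewrites $\tab(T|S)$ as a $\ZZ$-linear combination of tableaux $\tab(T'|S')$ whose shape strictly dominates $sh(T)$. Induction on dominance order (with boxes preserved) terminates because shapes are bounded by the total number of entries; the base case is the one-column shape, where every filling is already standard up to sign. The content on each side is never altered by the shuffle, which gives the content claim for free.

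For linear independence, the plan is to use Proposition~\ref{prop:super-tensor} together with the classical $\GL(V)$-computation of Theorem~\ref{thm:motivation}. Tensoring $\Super(L^-|P^+)$ with $\CC$ and specializing the places to variables indexed by a basis of a vector space $V$ of sufficiently large dimension turns the letter-place generators into coordinates on $T(\bigwedge V)$, and the standard tableaux $\tab(T_r|T_c)$ in each fixed multidegree correspond exactly to the $\GL(V)$-basis in Theorem~\ref{thm:motivation}: the row-strict $T_r$'s of shape $\lambda'$ count the multiplicity $K_{\lambda',\mu}$, and the column-strict $T_c$'s of shape $\lambda$ count $\dim \SS^\lambda V$. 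Since the spanning count matches the dimension of this free module in each multidegree, the standard tableaux must be independent over $\ZZ$ (freeness is inherited because all relations among letter-place generators are exterior). Alternatively, one can assign each standard pair a monomial ``leading term'' in the natural exterior algebra basis of $\Super(L^-|P^+)$ and check these leading terms are distinct.

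The main obstacle is the construction and verification of the straightening syzygy, in particular checking that the Laplace shuffle identity derived from R4 and R$4'$ produces only tableaux of strictly larger shape in dominance order (and none of the same shape with the original violation). This is a combinatorial bookkeeping exercise on the coproducts in $\Super(L^-)$ and $\Super(P^+)$: one must choose the ``straightening window'' in rows $i$ and $i+1$ carefully so that the identity term in the shuffle recovers $\tab(T|S)$ itself while every other term moves mass from row $i+1$ up to row $i$. The rest of the argument — the linear-independence step via specialization, and the content-preservation statement — are then essentially formal consequences.
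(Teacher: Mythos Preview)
The paper does not supply its own proof of this theorem: it is stated with attribution to Grosshans--Rota--Stein \cite{grs} and used as a black box thereafter. There is therefore nothing in the paper to compare your proposal against.

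That said, your sketch is broadly the standard approach taken in \cite{grs} and its descendants: a straightening algorithm based on Laplace/shuffle identities that replaces a non-standard pair by a $\ZZ$-linear combination of pairs of strictly larger shape in dominance order (with content preserved on both sides), together with a separate linear-independence argument. Your remark that the row-strictness of $T_r$ is ``cheap'' because $\Super(L^-)$ is an exterior algebra is correct; the substantive straightening happens on the column side. One caution: your independence argument via specialization to $T(\bigwedge V)$ and the $\GL(V)$ dimension count of Theorem~\ref{thm:motivation} only directly gives independence over $\CC$; to get independence over $\ZZ$ you should instead use the leading-term argument you mention as an alternative, or else argue that the straightening relations are integral and unitriangular in an appropriate order. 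The original source handles this by working integrally throughout.
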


\section{The Whitney Algebra and Module of a Matroid}
We assume that the reader is familiar with the basic concepts in
matroid theory (see, e.g., \cite{white}).

In this section we define the Whitney algebra and Whitey module of
matroid. We then show that if $M$ is realizable over $\CC$ then the
standard tableau pairs of the previous section have nonzero image in
the Whitney module of $M$ if and only if the rows of the first tableau
index independent sets of $M$.
\subsection{Definitions}
Let $M$ be a matroid on $E$ of rank $r(M)$. Decomposable elements of
$\bigwedge E$ are given by words on $E$. We say that a decomposable
element $e_{i_1}e_{i_2} \dots e_{i_k} \in \bigwedge E$ is a dependent
word if $\{i_1,i_2,\dots,i_k\}$ is a dependent set in $M$. Likewise we
define independent words.
\begin{definition}[Crapo--Schmitt {\cite{crapo-schmitt}}]
  The Whitney algebra of a matroid $M$, denoted $\W(M)$, is the
  quotient by $T(\bigwedge E)$ by the ideal generated by the elements 
  \[
  \delta_\alpha(w)
  \]
  where $w$ is a dependent word in $M$ and $\alpha$ is a composition
  of $|w|$.
\end{definition}

The following definition was also given by Brini and Regonati
(unpublished, \cite{brini}).
\begin{definition}
  The Whitney module of a matroid $M$, denoted $W(M)$, is defined to
  be the quotient of $\Super(L^-|P^+)$ by the two-sided ideal
  generated by the elements
  \[
  (w|p^{(\alpha)})
  \]
  where $w$ is a dependent word in $M$ and $\alpha$ is a composition of
  $|w|$.
\end{definition}
Using rule R4 it is clear that it is sufficient to take $w$ to be the
word of a circuit of $M$ in the definition of $W(M)$.

It is obvious from Proposition~\ref{prop:coproduct-slice} that there
is a surjective map $\mathcal{W}(M) \to W(M)$ that takes the internal
product of $\mathcal{W}(M)$ to the product that $W(M)$ inherits as a
quotient of an exterior algebra. One can think of $W(M)$ as being
obtained from $\W(M)$ by appending a half-infinite string of the form
$1 \otimes 1 \otimes 1 \otimes \dots$ to the right of every element of
$\W(M)$ (see the comments at the end of \cite{crapo-schmitt}).

Since $\Super(L^-|P^+)$ is a graded commutative algebra and the ideal
defining $W(M)$ is homogeneous, $W(M)$ inherits a grading. Each of the
graded pieces is a finitely generated $\ZZ$-module, and hence can be
written as the direct sum of a free part and a torsion part.
\begin{proposition}
  There is a direct sum decomposition 
  \[
  W(M) = W(M)_{free} \oplus W(M)_{tor}
  \]
  where $W(M)_{free}$ is free and $W(M)_{tor}$ is torsion.
\end{proposition}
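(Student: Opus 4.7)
The plan is to reduce the statement to the structure theorem for finitely generated modules over a PID, applied one graded piece at a time, and then to reassemble the pieces consistently.

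First, I would observe that the grading on $\Super(L^-|P^+)$ descends to $W(M)$ since the defining ideal is generated by elements $(w|p^{(\alpha)})$ that are homogeneous (of degree $|w| = \sum \alpha_i$). Write $W(M) = \bigoplus_n W(M)_n$ for this grading. Each $W(M)_n$ is a quotient of the corresponding graded piece of $\Super(L^-|P^+)$, which is a free $\ZZ$-module of finite rank (for instance by the standard basis theorem, Theorem~\ref{thm:std basis}, restricted to a fixed total degree and bounded alphabet once we note that only finitely many places $p_i$ can appear). Hence $W(M)_n$ is a finitely generated $\ZZ$-module.

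Next, I would define the torsion submodule $W(M)_{tor} \subseteq W(M)$ in the usual way as the set of elements killed by some nonzero integer. Because the grading is by $\ZZ_{\geq 0}$ and scalar multiplication by an integer preserves degree, an element is torsion if and only if each of its homogeneous components is torsion; equivalently, $W(M)_{tor} = \bigoplus_n (W(M)_n)_{tor}$, so $W(M)_{tor}$ is itself a graded submodule.

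Now I would apply the structure theorem for finitely generated modules over $\ZZ$ to each $W(M)_n$ to pick a free complement: for each $n$ choose a splitting $W(M)_n = F_n \oplus (W(M)_n)_{tor}$ with $F_n$ free of finite rank. Setting $W(M)_{free} := \bigoplus_n F_n$, the direct sum of these pointwise splittings yields the desired decomposition $W(M) = W(M)_{free} \oplus W(M)_{tor}$, with $W(M)_{free}$ free (as a direct sum of free modules) and $W(M)_{tor}$ torsion by construction.

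The one conceptual point worth flagging, which is more bookkeeping than obstacle, is that the splitting is not canonical in $W(M)_{free}$; only $W(M)_{tor}$ is intrinsic. Nothing in the statement requires naturality, so a single choice of complement in each degree suffices. The genuine content of the proposition is thus simply that each graded piece is finitely generated, and once that is recorded the result is immediate from the PID structure theorem.
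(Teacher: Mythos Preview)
Your approach matches the paper's exactly: the paper offers no separate proof, relying only on the sentence immediately preceding the proposition, which says that $W(M)$ inherits a grading with finitely generated pieces and then invokes the structure theorem. Your write-up simply fleshes this out.

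There is, however, a gap in your justification of finite generation. You grade by a single integer $n$ (total degree) and assert that each $W(M)_n$ has finite rank, but this is false: already in degree~$1$ the pairs $(e|p_j)$ for $j=1,2,3,\dots$ give infinitely many basis elements, since $P^+$ is infinite. Your parenthetical that ``only finitely many places $p_i$ can appear'' is not true for a fixed total degree. The repair is to use the finer multigrading by place content (or by both letter and place content): each generator $(w|p^{(\alpha)})$ of the defining ideal is homogeneous of place content $\alpha$, so the ideal is multigraded, and for a fixed place content the corresponding piece of $\Super(L^-|P^+)$ genuinely has finite rank over $\ZZ$ since $E$ is finite. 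With this adjustment your argument goes through verbatim, and summing the degreewise splittings over all multi-indices reassembles to the stated decomposition.
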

It is a basic example of Crapo and Schmitt \cite{crapo-schmitt} that
if $M$ is not realizable over a field of characteristic zero then
$W(M)_{tor}$ can be non-zero. It is unknown if $W(M)_{tor}$ is zero
when $M$ is realizable over a field of characteristic zero.

\subsection{Tableaux in the Whitney Module}
Let $T$ be a tableau with entries in $[n]$, and $S$ an arbitrary
tableau of the same shape.  Since $W(M)$ is a quotient of
$\Super(L^-|P^+)$ we can project the elements $\tab(T|S)$ of
Section~\ref{sec:std basis} into $W(M)$.  Abusing notation, we denote
the image of $\tab(T|S)$ in $W(M)$ by $\tab(T|S)$. 

Note that every standard tableaux $\tab(T|S)$ can be written as
\[
(w_1| p^{(\alpha^1)})(w_2| p^{(\alpha^2)}) \dots (w_\ell|
p^{(\alpha^\ell)}),
\]
where $\alpha^i$ is a composition of $|w_i|$ and $|w_1| \geq |w_2|
\geq \dots \geq |w_\ell|$.
\begin{proposition}
  The image of an arbitrary tableaux $\tab(T|S)$ in $W(M)$ is zero if
  some row of $T$ indexes a dependent set of $M$.
\end{proposition}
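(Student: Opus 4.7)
The plan is to reduce the vanishing of $\tab(T|S)$ in $W(M)$ to the single fact that $(w|p^{(\alpha)})$ is a generator of the defining ideal whenever $w$ is dependent.

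First, I would invoke the factored form $\tab(T|S) = (w_1|q_1)(w_2|q_2)\cdots(w_\ell|q_\ell)$ coming directly from the definition of $\tab$, where $w_i$ is the product in $\Super(L^-)$ of the entries of the $i$-th row of $T$ and $q_i$ is the corresponding divided-power monomial in $\Super(P^+)$ coming from the $i$-th row of $S$. Since $W(M)$ is the quotient of $\Super(L^-|P^+)$ by a two-sided ideal $I$, if some factor $(w_i|q_i)$ lies in $I$ then so does the whole product. It therefore suffices to show that $(w|q) \in I$ whenever $w$ is a dependent word of $M$ and $q \in \Super(P^+)$ is homogeneous of degree $|w|$, independent of which specific $q$ appears.

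Second, I would exploit the bilinearity of the Laplace pairing: the map $q \mapsto (w|q)$ is $\ZZ$-linear on $\Super(P^+)$, so it is enough to check vanishing for $q$ running over a $\ZZ$-spanning set of the degree $|w|$ piece. Using commutativity of $\Super(P^+)$ together with the divided-power relation $p^{(a)}p^{(b)} = \binom{a+b}{a}p^{(a+b)}$, any such element is a $\ZZ$-linear combination of monomials $p_{i_1}^{(\alpha_1)} p_{i_2}^{(\alpha_2)} \cdots p_{i_m}^{(\alpha_m)}$ with pairwise distinct indices $i_j$ and $\alpha = (\alpha_1,\dots,\alpha_m)$ a composition of $|w|$. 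So the task reduces to showing each such monomial pairing lies in $I$.

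Third, I would use the evident place-permutation symmetry. Any bijection $\sigma\colon P^+ \to P^+$ induces an algebra automorphism of $\Super(L^-|P^+)$ via $(e|p_j) \mapsto (e|p_{\sigma(j)})$, and this automorphism carries the generating set of $I$ to itself: a permuted generator $(w|p_{\sigma(1)}^{(\alpha_1)} \cdots p_{\sigma(n)}^{(\alpha_n)})$ is, by commutativity of $\Super(P^+)$, again of the form $(w|p^{(\beta)})$ after reordering place factors. Choosing $\sigma$ that sends $i_j \mapsto j$, the element $(w|p_{i_1}^{(\alpha_1)} \cdots p_{i_m}^{(\alpha_m)})$ is sent to $(w|p^{(\alpha)})$, a generator of $I$; hence the original element also lies in $I$, completing the reduction. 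The only delicate point, and the one I would expect to be the main obstacle, is checking carefully that this $\S_\infty$-action preserves $I$; this ultimately reduces to the fact that the generators of $I$ are defined purely in terms of the matroid structure on $L^-$ together with the $\S_\infty$-invariant commutative divided-power structure of $\Super(P^+)$.
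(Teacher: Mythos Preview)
Your argument is correct and matches the paper's approach: factor $\tab(T|S)$ as a product of Laplace pairings $(w_i|q_i)$, observe that any factor with $w_i$ dependent lies in the defining ideal, and conclude. The paper does this in a single sentence, tacitly treating each $q_i$ as some $p^{(\alpha)}$; your steps 2--5 make this explicit, though once you have reduced to a monomial $p_{i_1}^{(\alpha_1)}\cdots p_{i_m}^{(\alpha_m)}$ with distinct indices you can bypass the place-permutation argument entirely by writing it as $p^{(\beta)}$ for the weak composition with $\beta_{i_j}=\alpha_j$ and zeros elsewhere (recall $p^{(0)}=1$).
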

\begin{proof}
  This follows since each tableau is a product of elements of the form
  $(w| p^{(\alpha)})$, and we know that this element is zero in the
  Whitney module if $w$ is a dependent word.
\end{proof}

The main theorem of this section is the following result.
\begin{theorem}\label{thm:gamas}
  Let $S$ be a column strict tableau. If $M$ is realizable over $\CC$,
  the image of image of the tableaux $\tab(T|S)$ in $W(M)$ is non-zero
  if and only if the rows of $T$ index independent sets of $M$.
\end{theorem}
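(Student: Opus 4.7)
The ``only if'' direction is the content of the preceding proposition. For the converse my plan is to produce an explicit map out of $W(M)$ under which $\tab(T|S)$ has nonzero image. Fix a realization $\phi\colon E\to V$ of $M$ over $\CC$ with $V=\CC^{r(M)}$, together with a basis $B$ of $V$.

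The first step is to construct a $\ZZ$-algebra homomorphism $\Phi\colon W(M)\to \Super(L^-_B|P^+)$ induced by $\phi$. Extending $\phi$ $\ZZ$-linearly from $L^-_E$ to the $\ZZ$-span of $L^-_B$ and propagating via rules R2 and R4$'$ yields a $\ZZ$-algebra map from $\Super(L^-_E|P^+)$ that sends $(w|p^{(\alpha)})$ to $(\phi(w)|p^{(\alpha)})$, where $\phi(w)\in\bigwedge V$ is the associated wedge of images. Since $\phi$ realizes $M$, $\phi(w)=0$ in $\bigwedge V$ whenever $w$ is a dependent word; the multilinearity built into the Laplace pairing then forces $(\phi(w)|p^{(\alpha)})=0$, so $\Phi$ descends to a map out of $W(M)$.

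The second step is to show $\Phi(\tab(T|S))\neq 0$ under the assumption that each row of $T$ indexes an $M$-independent set, equivalently a linearly independent set in $V$. I would expand $\Phi(\tab(T|S))$ in the standard basis of $\Super(L^-_B|P^+)$ afforded by Theorem~\ref{thm:std basis}. The coefficients of this expansion are polynomial functions of the entries of the matrix of $\phi$ with respect to $B$, so it suffices to exhibit one coefficient that is not identically zero on the realization space of $M$; once this is done, replacing $\phi$ by a Zariski-generic realization (which exists by the realizability hypothesis) makes that coefficient nonzero, whence $\Phi(\tab(T|S))\neq 0$.

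The main obstacle is isolating such a surviving leading term. My plan is to combine the dominance-order statement of Theorem~\ref{thm:std basis} with a direct bracket computation to pin down a candidate standard pair $(T_0,S_0)$ whose coefficient in $\Phi(\tab(T|S))$ factors, up to sign, as a product over the rows of $T$ of the corresponding minors of the matrix of $\phi$; each such minor is nonzero precisely because the associated row of $T$ indexes a linearly independent subset of $V$. Conceptually, this is the letter-place incarnation of Gamas's classical theorem on the nonvanishing of Young symmetrizers applied to tensor products of vectors, and that analogy is what I would lean on to verify the coefficient computation in the letter-place setting.
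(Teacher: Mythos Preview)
Your first step---pushing $\tab(T|S)$ forward along a realization to land in $\Super(L^-_B|P^+)\otimes\CC$---is exactly how the paper begins (cf.\ the map $f$ in the proof of Theorem~\ref{thm:gamas}).  The divergence is in the second step, and that is where your proposal has a genuine gap.

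You assert that some standard pair $(T_0,S_0)$ occurs in the expansion of $\Phi(\tab(T|S))$ with coefficient equal (up to sign) to a \emph{product} over rows of the relevant minors of the realization matrix.  But once you expand each $\phi(w_i)$ in the wedge basis of $V$ and straighten, many different tuples $(J_1,\dots,J_\ell)$ can contribute to the same standard term, so the coefficient of $(T_0,S_0)$ is in general a \emph{signed sum} of products of minors, not a single product.  Showing that this sum does not vanish identically on the realization space is precisely the content of the theorem; appealing to ``Gamas's classical theorem'' here is circular, since this \emph{is} the letter-place incarnation of that result.  Dominance alone does not isolate a term whose coefficient is a single monomial in the Pl\"ucker coordinates.

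The paper avoids this computation entirely.  Instead of trying to read off a coefficient, it exploits the $\GL_n(\CC)$-action on $\Super(L^-|P^+)\otimes\CC$: act by a generic $A$ sending $f(w_1)$ to $e_1e_2\cdots e_{\lambda_1}$, then use Lemma~\ref{lem:gamas} (multiplication by $(e_1\cdots e_{\lambda_1}\,|\,p^{(\alpha)})$ is an isomorphism between the relevant spans of standard tableaux) to peel off the top row and induct on the number of rows.  This inductive peeling replaces your proposed bracket computation with a structural argument that never requires identifying a specific surviving coefficient.  If you want to repair your approach without the $\GL$-action, you would need an explicit leading-term lemma for the straightening of $\prod_i(\phi(w_i)\,|\,p^{(\alpha^i)})$, and that lemma is essentially equivalent in difficulty to the theorem itself.
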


Before we proceed with the proof we set up a nice corollary, that
gives us a simple check of whether such a tableaux exists, having
prescribed the content and shape of $T$. The content of the $T$
determines a parallel extension of the labeled matroid $M$. Indeed if
the content of $T$ is $\mu$ (a composition with $n$ parts) then the
parallel extension is $M_\mu$, which has the $\mu_i$ copies of the
element $e_i$.

The \textit{rank partition} of a matroid $M$ is the sequence of
numbers $\rho(M) = (\rho_1,\rho_2,\dots)$ determined by the condition
that
\[
\rho_1 + \rho_2 + \dots + \rho_k
\]
is the size of the largest union of $k$ independent subsets of
$M$. This definition was first given by Dias da Silva in \cite{dds}
where he proved the following result.
\begin{theorem}[Dias da Silva {\cite{dds}}]\label{thm:dds}
  The rank partition of matroid is a partition. There is a partition
  of the ground set of a matroid $M$ into independent sets of size
  $\lambda_1 \geq \lambda_2 \geq \dots $ if and only if $\lambda \leq
  \rho(M)$ in dominance order.
\end{theorem}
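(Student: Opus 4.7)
The plan is to derive both claims from Edmonds's matroid union theorem, which identifies
\[
f(k) \;:=\; \rho_1 + \rho_2 + \cdots + \rho_k \;=\; \min_{A \subseteq E}\bigl(|E \setminus A| + k \cdot r(A)\bigr),
\]
the maximum size of a union of $k$ independent sets of $M$. For fixed $A$ the right-hand side is affine in $k$, so $f$ is a concave function of $k$. Concavity is equivalent to $\rho_k = f(k) - f(k-1)$ being weakly decreasing, establishing that $\rho(M)$ is a partition.

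The forward direction of the equivalence is immediate: given a partition $E = I_1 \sqcup \cdots \sqcup I_\ell$ into independent sets with sizes $\lambda_1 \geq \cdots \geq \lambda_\ell$, the set $I_1 \cup \cdots \cup I_k$ is a disjoint union of $k$ independent sets, so has size $\lambda_1 + \cdots + \lambda_k \leq f(k) = \rho_1 + \cdots + \rho_k$. This is exactly the dominance inequality $\lambda \leq \rho(M)$.

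For the converse, assume $\lambda \leq \rho(M)$ with $|\lambda| = |E|$. I would apply matroid union to the rank-truncations $M_j := M^{(\lambda_j)}$, whose independent sets are the independent sets of $M$ of size at most $\lambda_j$. A partition of $E$ into independent sets of shape $\lambda$ is exactly a disjoint system of bases of $M_1, \ldots, M_\ell$ covering $E$, whose existence reduces by matroid union to the inequality
\[
|T| \;\leq\; \sum_{j=1}^{\ell} \min\bigl(\lambda_j, r(T)\bigr) \quad \text{for every } T \subseteq E.
\]
To deduce this from $\lambda \leq \rho(M)$, I would observe that for fixed $T$ the function $\lambda \mapsto \sum_j \min(\lambda_j, r(T))$ is Schur-concave, so among partitions $\leq \rho(M)$ it is minimized at $\rho(M)$ itself. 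The inequality at $\lambda = \rho(M)$ holds because any realization $E = J_1 \sqcup \cdots \sqcup J_\ell$ of $\rho(M)$ (which exists since $f(\ell) = |E|$) satisfies $|T \cap J_j| \leq \min(\rho_j, r(T))$, giving $|T| = \sum_j |T \cap J_j| \leq \sum_j \min(\rho_j, r(T))$.

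The main obstacle is verifying the Schur-concavity of $\lambda \mapsto \sum_j \min(\lambda_j, r)$. I would check it on the elementary dominance moves that transfer a single box from part $j$ to part $i$ with $j < i$: a short case analysis based on how $\lambda_j$ and $\lambda_i$ compare to the threshold $r$ shows that such a move weakly decreases the sum, and iterating these moves generates the full dominance order, which is what is required.
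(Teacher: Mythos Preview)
The paper does not prove this theorem; it is quoted from Dias da Silva's paper and used as a black box, so there is no in-paper argument to compare against. Your route via Edmonds's matroid-union formula is the standard one and is essentially how the result is usually established, but two points need attention.

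The serious one is that your base case is circular. To verify the covering inequality $|T| \leq \sum_j \min(\rho_j, r(T))$ you invoke a partition $E = J_1 \sqcup \cdots \sqcup J_\ell$ with $|J_j| = \rho_j$. But the existence of such a partition is precisely the converse direction of the theorem applied to $\lambda = \rho(M)$; the fact that $f(\ell) = |E|$ only gives \emph{some} partition into $\ell$ independent sets, not one with the prescribed part sizes $\rho_1,\dots,\rho_\ell$. The fix is to bypass any realization and verify the inequality directly from the Edmonds formula: given $T$, let $k$ be the number of indices $j$ with $\rho_j \geq r(T)$, so that
\[
\sum_{j} \min\bigl(\rho_j, r(T)\bigr) \;=\; k\,r(T) + \sum_{j>k} \rho_j \;=\; k\,r(T) + |E| - f(k).
\]
Plugging $A = T$ into $f(k) = \min_A\bigl(|E \setminus A| + k\,r(A)\bigr)$ yields $f(k) \leq |E| - |T| + k\,r(T)$, hence $|T| \leq k\,r(T) + |E| - f(k)$, which is exactly the needed bound at $\lambda = \rho(M)$. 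Your Schur-concavity step then correctly pushes the inequality down to every $\lambda \leq \rho(M)$.

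The minor one is a direction slip in the Schur-concavity check. The elementary move you describe---transferring a box from part $j$ to part $i$ with $j<i$---moves \emph{down} in dominance order, and under it the sum $\sum_j \min(\lambda_j,r)$ weakly \emph{increases}, not decreases (just note that $x\mapsto\min(x,r)$ is concave, so the symmetric sum is Schur-concave). This is the direction you actually need, so the conclusion is unaffected; only the wording is reversed.
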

The following corollary is now immediate from the theorem.
\begin{corollary}
  There is a non-zero tableaux $\tab(T|S) \in W(M)$ of shape
  $\lambda$, where $S$ is columns strict and $T$ has content $\mu$, if
  and only if $\lambda \leq \rho(M_\mu)$ in dominance order.
\end{corollary}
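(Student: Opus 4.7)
The plan is to combine the two preceding results: Theorem~\ref{thm:gamas}, which characterizes non-vanishing of $\tab(T|S)$ in terms of independence of the rows of $T$, and Theorem~\ref{thm:dds}, which characterizes when the ground set of a matroid can be partitioned into independent sets of prescribed sizes. Note that for $\tab(T|S)$ to be standard (so that Theorem~\ref{thm:gamas} applies directly), $T$ must be row strict with entries in $[n]$, so within a row the entries are automatically distinct.

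First I would translate existence of a non-zero $\tab(T|S)$ of shape $\lambda$ with $T$ of content $\mu$ into a partition statement for $M_\mu$. Given such a $T$, the $i$-th row is a subset of $[n]$ of size $\lambda_i$; reading the row as picking one copy of each listed element in $M_\mu$, and doing this over all rows, yields a set-partition of the ground set of $M_\mu$ into pieces of sizes $\lambda_1 \geq \lambda_2 \geq \cdots$, because element $i \in [n]$ appears $\mu_i$ times across the rows and $M_\mu$ has $\mu_i$ parallel copies of $i$. The key observation is that a row of $T$ indexes an independent set of $M$ if and only if the corresponding subset of $M_\mu$ is independent in $M_\mu$: any two parallel copies are dependent in $M_\mu$, and otherwise independence in $M_\mu$ pulls back to independence in $M$ under the map that forgets the copy index. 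This establishes a bijection between row strict tableaux $T$ of shape $\lambda$ and content $\mu$ whose rows are independent in $M$ and partitions of the ground set of $M_\mu$ into independent sets of sizes $\lambda_1 \geq \lambda_2 \geq \cdots$.

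Now I would run both directions. For the forward implication, if $\tab(T|S)$ is non-zero, Theorem~\ref{thm:gamas} forces every row of $T$ to be independent in $M$, so the partition above consists of independent sets of $M_\mu$ with sizes $\lambda_1 \geq \lambda_2 \geq \cdots$, and Theorem~\ref{thm:dds} gives $\lambda \leq \rho(M_\mu)$ in dominance order. For the converse, given $\lambda \leq \rho(M_\mu)$, Theorem~\ref{thm:dds} produces a partition of the ground set of $M_\mu$ into independent sets of sizes $\lambda_1 \geq \cdots \geq \lambda_\ell$. Build $T$ by writing the elements (in $[n]$) of the $i$-th block of this partition in increasing order in the $i$-th row of the shape $\lambda$; this is row strict with content $\mu$ and independent rows. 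Pair with any column strict $S$ of shape $\lambda$ (for instance, the superstandard tableau whose $i$-th row is filled with $i$'s), and conclude $\tab(T|S) \neq 0$ in $W(M)$ by Theorem~\ref{thm:gamas}.

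There is no genuine obstacle beyond the bookkeeping of the parallel-extension dictionary; the substantive input on the matroid side is Dias da Silva's theorem and the substantive input on the algebraic side is Theorem~\ref{thm:gamas}, whose realizability hypothesis on $M$ is inherited by this corollary.
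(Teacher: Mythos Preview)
Your proposal is correct and follows exactly the route the paper intends: the paper simply declares the corollary ``immediate from the theorem,'' and what you have written is precisely the unpacking of that remark via the dictionary between row-distinct tableaux of content $\mu$ and set partitions of the ground set of $M_\mu$, feeding into Theorem~\ref{thm:dds} and Theorem~\ref{thm:gamas}. One small inessential point: Theorem~\ref{thm:gamas} does not require $T$ to be row strict (only that $S$ be column strict), so your aside about standardness is unnecessary, though harmless.
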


To prove Theorem~\ref{thm:gamas} we need a lemma.
\begin{lemma}\label{lem:gamas}
  Suppose that $S$ is a fixed column strict tableaux of shape
  $\lambda$ whose first row gives rise to the element $p^{(\alpha)}
  \in \Super(P^+)$. Let $S'$ denote $S$ with its first row removed.

  Define two vector spaces: $X$ is the subspace of $\Super(L^-|P^+)
  \otimes \CC$ spanned by standard tableaux $\tab(T|S)$ where $T$ has
  first row equal containing the numbers
  $\{1,2,\dots,\lambda_1\}$. The second vector space $X'$ is the
  subspace of $\Super(L^-|P^+) \otimes \CC$ spanned by any standard
  tableaux $\tab(T'|S')$.

  Then multiplication by
  \[
  (e_{ 1}e_2 \dots e_{\lambda_1} | p^{(\alpha)} )
  \]
  induces an isomorphism of vector spaces $X' \to X$.
\end{lemma}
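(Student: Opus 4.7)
The plan is to show directly that left multiplication by $(e_1 e_2 \dots e_{\lambda_1}|p^{(\alpha)})$ carries a standard basis of $X'$ bijectively onto a standard basis of $X$, which gives the isomorphism. Applying Theorem~\ref{thm:std basis} after tensoring with $\CC$, the elements $\tab(T'|S')$ with $T'$ row strict of shape $\lambda' := (\lambda_2, \lambda_3, \dots)$ and entries in $[n]$ form a basis of $X'$, and the elements $\tab(T|S)$ with $T$ row strict of shape $\lambda$, entries in $[n]$, and first row exactly $1, 2, \dots, \lambda_1$ form a basis of $X$.

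Next I would exhibit a bijection between the two indexing sets of fillings. Given a row strict $T'$ of shape $\lambda'$ with entries in $[n]$, prepend the row $1, 2, \dots, \lambda_1$ to obtain a filling $T$ of shape $\lambda$; deleting the first row is an inverse. The point that must be checked is that $T$ is row strict in the sense of the paper (strictly increasing rows, weakly increasing columns). The first row of $T$ is strictly increasing by construction. The column-weak-increase condition between rows $i$ and $i+1$ of $T$ for $i\geq 2$ is inherited from $T'$, so only the condition between rows $1$ and $2$ requires thought: the $j$-th entry in the first row of $T'$ is the $j$-th term of a strictly increasing sequence of positive integers, hence is $\geq j$, which is exactly the $j$-th entry of the prepended first row.

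Finally I would observe that the bijection above is realized by the claimed multiplication. By the definition $\tab(T|S) = (w_1|q_1)(w_2|q_2)\cdots(w_\ell|q_\ell)$, the prepended first row of $T$ contributes $w_1 = e_1 e_2 \dots e_{\lambda_1}$, the first row of $S$ contributes $q_1 = p^{(\alpha)}$ by hypothesis, and the product of the remaining factors is exactly $\tab(T'|S')$. Thus
\[
\tab(T|S) = (e_1 e_2 \dots e_{\lambda_1}|p^{(\alpha)}) \cdot \tab(T'|S'),
\]
so left multiplication sends the named basis of $X'$ bijectively onto the named basis of $X$ and is therefore an isomorphism. The main subtlety is the row-strictness verification in the previous paragraph; without it one would worry that the image lies outside the standard spanning set of $X$ and requires straightening via Theorem~\ref{thm:std basis}, which would obscure the bijection. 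Once that automatic compatibility is noted, no further calculation is needed.
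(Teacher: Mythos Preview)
Your proof is correct and is precisely the argument the paper has in mind: the paper's proof is the single sentence ``This follows directly from the standard basis theorem, since this map takes bases to bases,'' and you have simply unpacked what that sentence means, including the one nontrivial check that prepending the row $1,2,\dots,\lambda_1$ to a row strict $T'$ preserves the column-weak-increase condition.
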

\begin{proof}
  This follows directly from the standard basis theorem, since this
  map takes bases to bases.
\end{proof}
\begin{proof}[Proof of Theorem~\ref{thm:gamas}]
  Let $f: E \to V$ is a realization of $M$, where $V$ is a complex
  $n$-dimensional vector space. After choosing a basis for $V$, we can
  identify $\bigwedge V$ with $\Super(L^-)$ and since $f$ is a
  realization of $M$, the mapping $(e|p) \mapsto
  (f(e)|p)$ gives rise to a map of algebras
  \[
  f:W(M) \to \Super(L^-|P^+) \otimes \CC, \qquad (w|p^{(\alpha)})
  \mapsto (f(w)|p^{(\alpha)}),
  \]
  (compare Proposition 6.3 in \cite{crapo-schmitt}). This map will almost
  always fail to be surjective since $M$ will typically not have rank
  $n$. Note that $\Super(L^-|P^+)$ comes with a left $\GL_n(\CC)$
  action, induced by the natural action of $\GL_n(\CC)$ on
  $\Super(L^-) \otimes \CC$. Taking a limit, there is a corresponding
  action of $n \times n$ complex matrices on $\Super(L^-|P^+)$.

  Suppose that we have the tableaux
  \[
  \tab(T|S) = (w_1| p^{(\alpha^1)})(w_2| p^{(\alpha^2)})\dots (w_\ell|
  p^{(\alpha^\ell)}) \in W(M)
  \]
  where $S$ is a column strict tableaux of shape $\lambda$, length
  $\ell$ and $|w_i| = \lambda_i$. Applying the map $f$ from above we
  obtain
  \begin{multline*}
  f(\tab(T|S)) = (f(w_1)| p^{(\alpha^1)})(f(w_2)| p^{(\alpha^2)})\dots
  (f(w_\ell)| p^{(\alpha^\ell)})\\ \in \Super(L^-|P^+) \otimes \CC.
  \end{multline*}
  We will prove by induction on the length of $\lambda$ that
  $f(\tab(T|S))$ is not zero provided that $f(w_i) \neq 0$. Since the
  image of $\tab(T|S)$ is not zero, it must be that $\tab(T|S)\neq 0$
  in $W(M)$.

  Let $A$ be a generic matrix such that
  \[
  Af(w_1) = e_{1} e_2  \dots e_{\lambda_1}
  \]
  Since $A$ is generic, each of element $A(f(w_i))$ is not zero and
  decomposable in $\Super(L^-)\otimes \CC$. By Lemma~\ref{lem:gamas},
  we have that
  \[
  (e_{1}e_2 \dots e_{\lambda_1}| p^{(\alpha^1)}) (Af(w_2)|
  p^{(\alpha^2)})\dots (A(f(w_\ell))| p^{(\alpha^\ell)})
  \]
  is not zero if and only if 
  \[
  (Af(w_2)| p^{(\alpha^2)})\dots (A(f(w_\ell))| p^{(\alpha^\ell)})
  \]
  is not zero. This is not zero by induction. It only remains to check
  the basis step. This follows since
  \[
  (e_{i_1} \dots e_{i_k} |p^{(\alpha)}),
  \]
  $i_1<\dots<i_k$, is a basis element of $\Super(L^-|P^+) \otimes
  \CC$, according to the standard basis theorem.
\end{proof}
\begin{remark}
  One cannot remove the hypothesis that $S$ is column strict.  For
  example if $M$ is a boolean matroid (i.e., $W(M) = \Super(L^-|P^+)$)
  and
  \[
  T = S = \young(123,123)
  \]
  then
  \[
  \tab(T|S) = (e_1e_2e_3| p_1^{(1)}p_2^{(1)}p_3^{(1)})^2 = 0
  \]
  even though $(e_1e_2e_3| p_1^{(1)}p_2^{(1)}p_3^{(1)}) \neq 0$ in
  $W(M)$.
\end{remark}
It is unknown if Theorem~\ref{thm:gamas} holds for any realizable
matroid.
\section{Freedom Matroids}
In this section we define \textit{freedom matroids} and show that the
obvious spanning set for their Whitney modules are bases.
\subsection{Definition of a Freedom Matroid}
\begin{definition}
  We will denote the \textit{direct sum of a matroid $M$ with the one
    element rank one matroid} on the set $\{e\}$ by $M \oplus e$.

  Let $M$ be a matroid of rank larger than $0$. The \textit{truncation
    of $M$ to rank $k \leq r(M)$} is the matroid whose bases are those
  independent sets of $M$ with size $k$. The truncation of $M$ to rank
  $r(M)-1$ will be denoted $T(M)$.

  The \textit{principal extension of a matroid $M$ along the improper
    flat} is the matroid $M+e$ obtained by truncating the direct sum
  $M \oplus e$ to the the rank of $M$. That is $M+e = T(M \oplus e)$.
\end{definition}
We think of $M+e$ as adding a new element generically to $M$ without
increasing its rank.
\begin{definition}
  For $i \in \{0,1\}$ define $M_{(i)}$ to be the rank $i$ matroid on
  one element $e_1$.  Let $s$ be a binary sequence of length $n> 1$
  and $s'$ be the sequence obtained by deleting its last entry
  $s_n$. Define a matroid $M_s$ on the set $\{e_1, e_2,\dots,e_n\}$ by
  setting
  \[
  M_s = \begin{cases}
    M_{s'} \oplus e_n & s_n = 1, \\
    M_{s'} + e_n & s_n = 0.
    \end{cases}
  \]
  A \textit{freedom matroid} is a labeled matroid of the form $M_s$
  for some binary sequence $s$.
\end{definition}
\begin{example}
  The freedom matroid associated with the sequence $(1,0,1,0,1,0)$ is
  \[
  \left(\left(\left(\left(M_1 + e_2\right) \oplus e_3\right) +
      e_4\right) \oplus e_5\right) + e_6
  \]
  where $M_1$ is the one element rank one matroid with ground set
  $\{e_1\}$. It is represented linearly by the columns of the matrix
  \[\begin{bmatrix}
    1 & 1 & * & * & * & * \\
      &   & 1 & 1 & * & * \\
      &   &   &   & 1 & 1
    \end{bmatrix}\] where the blank entries are zero and the $*$'d
    entries are generic elements of $\CC$.
\end{example}

Freedom matroids arise in many contexts. They are the matroids
associated to the generic point of a Schubert strata of a
Grassmannian. They are known to be the matroids whose independence
complexes are shifted. They are special cases of lattice-path
matroids.
\subsection{The Whitney Module of a Freedom Matroid}
We can now state and prove our first main theorem.
\begin{theorem}\label{thm:whitney-freedom}
  Let $M$ be a freedom matroid (in particular, the ground set of $M$
  is ordered). The Whitney module $W(M)$ is free and a basis consists
  of those standard tableaux $\tab(T_r|T_c)$ where the rows of $T_r$
  index independent sets of $M$.
\end{theorem}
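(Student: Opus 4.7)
My plan is to establish the spanning and linear-independence parts of the claim separately. Spanning is immediate: by Theorem~\ref{thm:std basis} the standard tableaux form a $\ZZ$-basis of $\Super(L^-|P^+)$, and by the earlier Proposition any $\tab(T_r|T_c)$ with a dependent row of $T_r$ maps to zero in $W(M)$; hence the standard tableaux with independent rows span $W(M)$.

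Linear independence is the heart of the matter. I would induct on $n$, the length of the binary sequence $s$ with $M = M_s$; the base case $n = 1$ is a direct check in $W(M_{(0)})$ or $W(M_{(1)})$. For the inductive step, the two recursive constructions $M_s = M_{s'} \oplus e_n$ and $M_s = M_{s'} + e_n$ are handled separately. In the coloop case ($s_n = 1$), the circuits of $M_s$ coincide with those of $M_{s'}$ and no new relations involve $e_n$, so the candidate basis of $W(M_s)$ can be filtered by the occurrences of $n$ in $T_r$; each stratum should be in $\ZZ$-linear bijection with a stratum of the candidate basis of $W(M_{s'})$ after the $n$'s are removed from $T_r$ (together with the paired entries of $T_c$), and the inductive hypothesis closes the case.

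In the principal extension case ($s_n = 0$), new circuits $B \cup \{n\}$ arise for every basis $B$ of $M_{s'}$. My strategy is to combine the realization $f : W(M_s) \to \Super(L^-|P^+) \otimes \CC$ coming from a generic shifted matrix (as in the Example) with Lemma~\ref{lem:gamas}. Given a purported linear relation among candidate tableaux, pick a term whose shape is maximal in dominance order, apply a generic $A \in \GL_n(\CC)$ that maps its top-row image $f(w_1)$ to $e_1 e_2 \cdots e_{\lambda_1}$, and invoke Lemma~\ref{lem:gamas} to reduce the relation to one among tableaux of one fewer row on an auxiliary matroid of lower rank; the inductive hypothesis then forces the reduced relation, and hence the original, to be trivial.

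The principal extension case is the main obstacle. Two subtleties appear: first, the realization $f$ identifies parallel elements of $M_s$ (parallel ground-set elements map to proportional vectors), so $f$ is not injective on the candidate basis and one cannot simply read off linear independence in $\Super(L^-|P^+) \otimes \CC$; second, the straightening of the newly imposed relations must be shown to produce only standard tableaux with a dependent row. The shifted structure of freedom matroids --- dependent sets are closed under replacing an element by a larger one in the ground-set order --- is the combinatorial input that controls both issues, by keeping the Grosshans--Rota--Stein straightening inside the dependent-row locus and matching the top-row peeling of Lemma~\ref{lem:gamas} to an explicit matroid-theoretic reduction.
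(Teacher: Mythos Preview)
Your spanning argument is fine and matches the paper. The inductive architecture, however, diverges from the paper in a way that leaves real gaps.

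The paper's key structural move is one you do not make: it never treats the principal extension $M_{s'}+e_n$ directly. Instead it writes $M_{s'}+e_n = T(M_{s'}\oplus e_n)$ and proves two clean lemmas, one for each operation. For direct sums it shows $W(M\oplus N)\cong W(M)\otimes W(N)$ as graded algebras, and then uses the Super~RSK correspondence to check that the tensor basis coming from the inductive hypothesis matches the desired set of independent-row standard tableaux for $M\oplus e_{n+1}$. For truncation it observes that the ideal defining $W(T(M))$ inside $W(M)$ is generated by tableaux whose first row has length $r(M)$, and the dominance clause of Theorem~\ref{thm:std basis} guarantees that straightening any such tableau stays inside that span; hence $W(T(M))$ is free with the obvious basis. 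Neither step uses a realization, Lemma~\ref{lem:gamas}, or any analytic input.

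Your direct attack on the principal extension via the realization $f$ and Lemma~\ref{lem:gamas} has a genuine mismatch: your induction is on $n$, but the row-peeling reduction lowers the number of rows of the tableau, not the size of the ground set, so the inductive hypothesis as stated does not apply. The ``auxiliary matroid of lower rank'' you invoke is never specified, and there is no reason the peeled relation should live in the Whitney module of a freedom matroid on fewer elements. You also correctly flag, but do not resolve, the non-injectivity of $f$ on the candidate basis; working over $\CC$ this is fatal to reading off linear independence directly. Finally, your coloop step (``remove the $n$'s from $T_r$ together with the paired entries of $T_c$'') is not a well-defined map on tableaux of partition shape, whereas the paper's RSK argument is. The fix is to abandon the realization route here and instead factor $M_{s'}+e_n$ through $M_{s'}\oplus e_n$ followed by truncation, handling each step algebraically as above.
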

We already know that these tableaux span $W(M)$, so it suffices to
prove that $W(M)$ is free and the stated elements are linearly
independent when $M$ is a freedom matroid.

This result does not hold in general, as we will see in
Section~\ref{sec:universal}. Since uniform matroids are a special case
of freedom matroids (they are associated to sequences of the form
$(1,1,\dots,1,0,0,\dots,0)$) we see that the above theorem describes
the Whitney module of uniform matroids. This result was obtained in
2000 by Crapo and Schmitt \cite{crapo}.

\begin{lemma}\label{lem:direct-sum}
  Let $M$ and $N$ be matroids. There is an isomorphism of graded
  algebras
  \[
  W(M \oplus N) \approx W(M) \otimes W(N),
  \]
  where the $\otimes$ is the super tensor product of graded algebras.
\end{lemma}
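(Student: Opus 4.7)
The plan is to show that the quotient defining $W(M \oplus N)$ is compatible with a natural super tensor product decomposition of the ambient letter-place algebra. Throughout, let $E_M$ and $E_N$ denote the ground sets of $M$ and $N$, so that the ground set of $M \oplus N$ is the disjoint union $E_M \sqcup E_N$.

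First I would decompose the ambient algebra. Since $(L^-_{E_M \sqcup E_N} | P^+)$ is literally the disjoint union of $(L^-_{E_M}|P^+)$ and $(L^-_{E_N}|P^+)$, and since the exterior algebra on a disjoint union of (odd) generators is the super tensor product of the exterior algebras on each piece, the universal property produces a canonical isomorphism of graded superalgebras
\[
\Phi \colon \Super(L^-_{E_M \sqcup E_N} | P^+) \;\xrightarrow{\sim}\; \Super(L^-_{E_M} | P^+) \otimes \Super(L^-_{E_N} | P^+).
\]
Second, I would rewrite the ideal defining $W(M \oplus N)$. By the remark following the definition of $W(M)$, that ideal is generated by elements $(w|p^{(\alpha)})$ where $w$ runs over (words of) circuits. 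Because circuits of $M \oplus N$ are precisely the circuits of $M$ together with the circuits of $N$, this generating set splits as a union, and the ideal is a sum $I_M + I_N$, where $I_M$ is generated by $(w|p^{(\alpha)})$ with $w$ a circuit word in $M$, and similarly for $I_N$.

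Third, I would check that under $\Phi$, the ideal $I_M$ lands inside $J_M \otimes \Super(L^-_{E_N}|P^+)$, where $J_M \subset \Super(L^-_{E_M}|P^+)$ is the defining ideal of $W(M)$, and symmetrically for $I_N$. Here is where the Laplace pairing needs to be handled carefully: even though the places algebra $\Super(P^+)$ is shared by both factors, if $w$ is a product of letters all drawn from $L^-_{E_M}$ then expanding $(w|p^{(\alpha)})$ iteratively by rule R4 yields a sum of products of basic pairings $(e|q)$ in which every $e$ still lies in $L^-_{E_M}$. Thus the entire expansion lives in the subalgebra $\Super(L^-_{E_M}|P^+) \otimes 1$, and therefore $\Phi(I_M) \subseteq J_M \otimes \Super(L^-_{E_N}|P^+)$. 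The reverse inclusion is immediate, since every generator of $J_M$ lifts to a generator of $I_M$.

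Once these identifications are in place, the conclusion is formal: quotienting by a sum of ideals that have been identified as extensions from the two tensor factors gives
\[
W(M \oplus N) \;\cong\; \Super(L^-_{E_M}|P^+)/J_M \;\otimes\; \Super(L^-_{E_N}|P^+)/J_N \;=\; W(M) \otimes W(N),
\]
and $\Phi$ is a morphism of graded superalgebras so the isomorphism respects both the grading and the super sign rule. The main obstacle is the third step: the superficial worry is that the shared places $P^+$ might couple the two summands through the Laplace rules R4 and R$4'$, but the content of the argument is that R4 only splits the \emph{places} via the coproduct in $\Super(P^+)$, while the letters factor through the \emph{letter} coproduct in $\Super(L^-)$, which preserves the $E_M/E_N$ partition. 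Making this precise by induction on $|w|$ using R4 is the one genuinely non-formal check.
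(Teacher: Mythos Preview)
Your proposal is correct and follows essentially the same route as the paper: decompose the ambient letter-place algebra as a super tensor product, use that circuits of $M \oplus N$ are circuits of $M$ or of $N$ to split the defining ideal, and invoke the standard isomorphism $(H/I) \otimes (L/J) \cong (H \otimes L)/(I \otimes L + H \otimes J)$. Your third step, verifying via rule R4 that a generator $(w|p^{(\alpha)})$ with $w$ supported on $E_M$ stays in the $E_M$-factor, is exactly the content of the paper's equation identifying $I \otimes \Super(L^-_F|P^+)$ with the extended ideal, which the paper asserts without further comment.
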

Recall that the super tensor product of two graded $\ZZ$-algebras $H$
and $L$ has module structure given by the $\ZZ$-module $H \otimes L$
and algebra structure induced by the formula for multiplying
homogeneous elements $$(h \otimes l)(h' \otimes l') = (-1)^{|h'||l|}
hh' \otimes ll'.$$
\begin{proof}
  We use the following fact: If $H$ and $L$ are graded algebras and
  $I$ and $J$ are homogeneous ideals of $H$ and $L$, respectively,
  then as graded algebras
  \begin{equation}
    \label{eq:tensor product of algebras}
    H/I \otimes L/J \approx \frac{H \otimes L}{H \otimes J + I \otimes
      L}.    
  \end{equation}

  In the present situation, we suppose that $M$ is a matroid on $E$
  and $N$ is a matroid on $F$. Let $L_E^-$ denote the set of
  negatively signed letters $E$ and $L_F^-$ denote the set of
  negatively signed letters $F$. It is well known that
  \begin{equation}
    \label{eq:tensor product of super algebras}
    \Super(L^-_E|P^+) \otimes \Super(L^-_F |P^+) \approx \Super(L_E^-
    \cup L_F^- | P^+).    
  \end{equation}
  If $I$ is an ideal of $\Super(L^-_E|P^+)$ then under this
  isomorphism we have
  \begin{equation}\label{eq:ideal}
    I \otimes \Super(L^-_F |P^+) \to \Super(L^-_E \cup L^-_F |P^+) \cdot
    I,    
  \end{equation}
  i.e., the ideal $I \otimes \Super(L^-_F |P^+)$ maps to the ideal
  generated by $I$ in $\Super(L^-_E \cup L^-_F |P^+)$.

  To complete the proof all we need to note is that a circuit of $M
  \oplus N$ is either a circuit of $M$ or a circuit of $N$. Thus,
  \begin{multline}\label{eq:circuits of sum}
    \< (w|p^{\alpha}) : w\textup{ is a circuit of }M \oplus N \>= \<
    (w|p^{\alpha}) : w\textup{ is a circuit of }M \>\\+ \<
    (w|p^{\alpha}) : w\textup{ is a circuit of }N \>
  \end{multline}
  where here $\<-\>$ denotes taking the ideal in $\Super(L^-_E \cup
  L^-_F | P^+)$ generated by the elements $-$.

  Combining equations \eqref{eq:tensor product of
    algebras}---\eqref{eq:circuits of sum} with the definition of the
  Whitney module we have $W(M) \otimes W(N) \approx W(M \oplus N)$.
\end{proof}
\begin{corollary}
  If $W(M)$ and $W(N)$ are free $\ZZ$-modules, then so is $W(M \oplus
  N)$.
\end{corollary}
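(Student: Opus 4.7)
The plan is to deduce this directly from Lemma~\ref{lem:direct-sum}, which already identifies $W(M \oplus N)$ with the super tensor product $W(M) \otimes W(N)$. The key observation is that the super tensor product differs from the ordinary tensor product only in its multiplication (by the sign twist $(-1)^{|h'||l|}$); as a $\ZZ$-module it is exactly the usual tensor product $W(M) \otimes_\ZZ W(N)$. Since the underlying $\ZZ$-module structure is all that is relevant for the question of freeness, the corollary reduces to the classical fact that the tensor product of two free $\ZZ$-modules is free.

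Concretely, I would proceed as follows. First, invoke Lemma~\ref{lem:direct-sum} to write $W(M \oplus N) \cong W(M) \otimes W(N)$, and note that this isomorphism is in particular an isomorphism of $\ZZ$-modules. Second, observe that if $\{a_i\}_{i \in I}$ is a $\ZZ$-basis of $W(M)$ and $\{b_j\}_{j \in J}$ is a $\ZZ$-basis of $W(N)$, then $\{a_i \otimes b_j\}_{(i,j) \in I \times J}$ is a $\ZZ$-basis of $W(M) \otimes W(N)$. Third, conclude that $W(M \oplus N)$ is free with basis indexed by $I \times J$.

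There is essentially no obstacle here: the only mild point to verify is that the sign twist in the super tensor product plays no role, but this is transparent since a twist of the multiplication leaves the underlying abelian group unchanged. In particular, one does not even need to grade-refine the statement, though one could: the grading on $W(M \oplus N)$ corresponds to the total grading on $W(M) \otimes W(N)$, so each graded piece $W(M \oplus N)_d$ is the finite direct sum $\bigoplus_{d_1 + d_2 = d} W(M)_{d_1} \otimes_\ZZ W(N)_{d_2}$, which is free whenever the tensor factors are.
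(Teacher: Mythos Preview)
Your proposal is correct and is exactly the intended argument: the paper states this corollary immediately after Lemma~\ref{lem:direct-sum} with no separate proof, relying precisely on the fact that the super tensor product agrees with the ordinary tensor product as a $\ZZ$-module, so freeness is inherited. Your added remark that the sign twist affects only the multiplication, and your optional graded refinement, are both fine elaborations of what the paper leaves implicit.
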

\begin{lemma}\label{lem:truncation}
  Suppose that $M$ is a matroid of rank larger than $0$ and $W(M)$ is
  free, so that $W(M)$ has a basis $\mathcal{B}$ consisting of some
  standard tableaux. Then $W(T(M))$ is free and has a consisting of
  those standard tableaux in $\mathcal{B}$ whose first row has length
  less than $r(M)$.
\end{lemma}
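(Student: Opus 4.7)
The plan is to realize $W(T(M))$ as $W(M)$ modulo an explicit ideal $I$ and then use Theorem~\ref{thm:std basis} to show that the subset of $\mathcal{B}$ whose first row has length exactly $r(M)$ is a $\ZZ$-basis of $I$, so that the complementary subset descends to a basis of the quotient. For the plan it is natural to take $\mathcal{B}$ to be the full obvious spanning set, namely every standard tableau $\tab(T_r|T_c)$ whose rows of $T_r$ are independent in $M$. The circuits of $T(M)$ are exactly the circuits of $M$ of size at most $r(M)$ together with the bases of $M$ (circuits of $M$ of size $r(M)+1$ already contain a basis and add nothing new), so the surjection $W(M) \twoheadrightarrow W(T(M))$ has kernel equal to the ideal $I$ of $W(M)$ generated by the elements $(b|p^{(\alpha)})$ for $b$ a basis of $M$ and $\alpha$ a composition of $r(M)$. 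Writing $\mathcal{B} = \mathcal{B}' \sqcup \mathcal{B}''$ according to whether the first row has length $<r(M)$ or $=r(M)$, the lemma reduces to showing that $\mathcal{B}''$ is a $\ZZ$-basis of $I$.

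The containment $\mathcal{B}'' \subseteq I$ is immediate: an element $\tab(T_r|T_c) \in \mathcal{B}''$ has first row indexing an independent set of size $r(M)$, i.e.\ a basis $b$, and by the definition of $\tab$ the element factors on the left as $(b|p^{(\alpha)}) \cdot \tab(T_r'|T_c')$. For the other containment, since $\Super(L^-|P^+)$ is supercommutative it suffices to show that $(b|p^{(\alpha)}) \cdot \tab(T_r|T_c) \in \text{Span}_{\ZZ}(\mathcal{B}'')$ for every basis $b$, every composition $\alpha$ of $r(M)$, and every $\tab(T_r|T_c) \in \mathcal{B}$. Because each row of $T_r$ is independent in $M$, the first row length $\lambda_1$ is at most $r(M)$, so prepending $b$ in increasing order as a new top row gives a legitimate partition shape and the product equals $\tab(T_r^\ast|T_c^\ast)$ for the obvious augmentations. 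Expanding $\tab(T_r^\ast|T_c^\ast)$ via Theorem~\ref{thm:std basis} yields a $\ZZ$-linear combination of standard tableaux whose shapes dominate $(r(M),\lambda_1,\lambda_2,\dots)$, so every summand has first row of length at least $r(M)$.

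Projecting to $W(M)$, any summand whose first row has length $>r(M)$ vanishes because that row then indexes a set exceeding the rank of $M$, hence dependent; any surviving summand has first row of length exactly $r(M)$ and, being nonzero in $W(M)$, has all rows independent, forcing the first row to be a basis of $M$. Each surviving summand is therefore a standard tableau with independent rows and first row of length $r(M)$, hence lies in $\mathcal{B}''$. This proves $I \subseteq \text{Span}_{\ZZ}(\mathcal{B}'')$; linear independence of $\mathcal{B}''$ is inherited from $\mathcal{B}$, so $\mathcal{B}''$ is a basis of $I$, and the images of $\mathcal{B}'$ form a basis of $W(M)/I = W(T(M))$. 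I expect the main obstacle to be the dominance step: one must verify that after prepending $b$ and straightening, the lower bound on the first row from Theorem~\ref{thm:std basis} really does force every surviving summand in $W(M)$ to have first row of length exactly $r(M)$, so that no element of $\mathcal{B}'$ can leak into the expansion.
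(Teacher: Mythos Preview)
Your proposal is correct and follows essentially the same approach as the paper. Both arguments identify $W(T(M))$ as the quotient of $W(M)$ by the ideal generated by tableaux whose first row has length $r(M)$, and then invoke the dominance clause of Theorem~\ref{thm:std basis} to show this ideal is spanned by exactly the elements of $\mathcal{B}$ with first row of length $r(M)$. Your version is more explicit (you spell out the circuit description of $T(M)$, the two containments $\mathcal{B}''\subseteq I$ and $I\subseteq\operatorname{Span}_\ZZ(\mathcal{B}'')$, and the supercommutativity that lets you place the factor $(b|p^{(\alpha)})$ on the left), and you correctly flag that the argument uses $\mathcal{B}$ to be the full set of independent-row standard tableaux---an assumption the paper also makes implicitly when it asserts ``each $(T_i|S_i)\in\mathcal{B}$''. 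The dominance concern you raise at the end is already handled by your own argument: first parts strictly greater than $r(M)$ die in $W(M)$, and the remaining summands are either zero or lie in $\mathcal{B}''$.
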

\begin{proof}
  It is easy to convince oneself that
  \[
  W(T(M)) = W(M)/\< \tab(T|S): \operatorname{sh}(T)=\lambda, \lambda_1 =
  r(M)\>
  \]
  where $\<-\>$ denotes taking the two sided ideal in $W(M)$ generated
  by the elements $-$. Now, by the standard basis theorem we know that
  for non-standard tableaux $\tab(T|S)$,
  \[
  \tab(T|S) = \sum_{i=1}^m c_i \tab(T_i|S_i)
  \]
  where $\operatorname{sh}(T_i) \geq \operatorname{sh}(T)$ in
  dominance order and each $(T_i|S_i) \in \mathcal{B}$. This implies
  that if the first row of $T$ has length $r(M)$ then the first row of
  every $T_i$ has length at least $r(M)$. It follows that
  \[
  W(T(M)) = W(M)/\< \tab(T|S): \operatorname{sh}(T)=\lambda, \lambda_1 =
  r(M), \tab(T|S) \in \mathcal{B}\>
  \]
  Since $W(M)$ is free, the claim now follows.
\end{proof}
\begin{proof}[Proof of Theorem~\ref{thm:whitney-freedom}]
  The theorem will follow by induction. We will prove that if the
  result holds for a matroid $M$ on $\{e_1 < e_2 < \dots < e_n\}$ then
  it also holds for $M \oplus e_{n+1}$ and $T(M)$. Since freedom
  matroids are closed under these operations, it is sufficient to
  prove the result for the two one element matroids, which is trivial.

  For a positive integer $m$ let $W(M)_{\leq m}$ be the subalgebra of
  $W(M)$ generated by letter place pairs $(e|p)$ where $p \in
  \{p_1,p_2,\dots,p_m\}$. By our assumption on $W(M)$ we see that a
  basis for $W(M)_{\leq m}$ consists of those tableaux $\tab(T_r|T_c)$
  where $T_r$ is row strict, $T_c$ is column strict, every row of
  $T_r$ indexes an independent set of $M$ and every entry of $T_c$ is
  at most $m$. This is a module of finite rank, since the shape of
  every tableau appearing must fit into a $m$-by-$r(M)$ box.

  In light of Lemma~\ref{lem:direct-sum}, it is straightforward to
  convince oneself that 
  \[
  W(M \oplus e_{n+1})_{\leq m} = W(M)_{\leq m} \otimes
  W(\{e_{n+1}\})_{\leq m},
  \] 
  where $\{e_{n+1}\}$ denotes the rank one element one element
  matroid. It follows that the the rank of $W(M)_{\leq m} \otimes
  W(\{e_{n+1}\})_{\leq m}$ as a $\ZZ$-module is the number of pairs
  \[
  \left((T_r,T_c) \quad , \quad (e_{n+1}|p_{i_1})(e_{n+1}|p_{i_2})
    \dots (e_{n+1}|p_{i_k}) \right)
  \]
  where $T_r$ is row strict, $T_c$ is column strict, every row of
  $T_r$ indexes an independent set of $M$, the entries of $T_c$ are at
  most $m$, and $1 \leq i_1 < i_2 < \dots < i_k \leq m$. From this
  pair we produce two new tableaux $T_r'$ and $T_c'$ where
  \[
  T_c' = (((T_c \leftarrow i_1) \leftarrow i_2) \leftarrow \dots \leftarrow
  i_k)
  \]
  is obtained by the usual Robinson--Schensted row insertion
  and $T_r'$ is obtained from $T_r$ by recording the new boxes of
  $T_c'$ with $n+1$'s.

  It follows from the Super RSK correspondence \cite{senato1,senato2}
  that this map is bijective and its image consists of pairs of
  tableaux $(T,S)$ of the same shape where $T$ is row strict with
  entries in $[n+1]$, $S$ is column strict with entries in $[m]$ and
  the rows of $T$ index independent subsets of $M \oplus
  e_{n+1}$. This is because $e_{n+1}$ is in no circuit of the direct
  sum. This complete the proof of the induction for direct sums.

  Suppose now that $W(M)$ is free and a basis consists of those
  standard tableaux $(T_r|T_c)$ such that every row of $T_r$ indexes
  an independent set of $M$. Then Lemma~\ref{lem:truncation} proves
  that the same statement holds for $W(T(M))$.
\end{proof}

\section{The Doubly Multilinear Submodule of $W(M)$}\label{sec:universal}
The doubly multilinear submodule of $W(M)$ is the submodule generated
by elements of the form
\[
(e_1 | p_{\sigma(1)} )(e_{2} | p_{\sigma(2)} ) \dots (e_{n} | p_{\sigma(n)} )
\in W(M)
\]
where $\sigma$ is any permutation in the symmetric group $\S_n$. We
denote this submodule by $U(M)$. There is a right action of $\S_n$ on
$U(M)$, by permuting places. We will primarily be interested in the
complexified version $\CC \otimes U(M)$, where $M$ is a matroid
realizable over the complex numbers.

The module $U(M)$ arose independently in the thesis of the author,
where it was related to the smallest symmetric group (or general
linear group) representation containing a fixed decomposable
tensor. Recall that $\S_n$ acts on the right of $V^{\otimes n}$ via
place permutation. If $u \in V^{\otimes n}$ is any tensor let $\S(u)$
be the smallest $\S_n$-representation in $V^{\otimes n}$ containing
$u$. If $f : E \to V$ is a realization of $M$, then it is easy to see
that the map
\[
(e_1 | p_1) (e_2 | p_2) \dots (e_n | p_n) \mapsto f(e_1) \otimes
f(e_2) \otimes \dots \otimes f(e_n) \in V^{\otimes n}
\]
extends to a unique surjective map of $\CC \S_n$-modules
\[
\CC\otimes U(M) \to \S( f(e_1) \otimes f(e_2) \otimes \dots \otimes
f(e_n) )
\]
The latter representation is a subtle projective invariant of the
vector configuration $f(E) \subset V$. For example, there is no known
example of two different realizations $f,g:E \to V$ of the same
matroid such that
\[
\S( f(e_1) \otimes f(e_2) \otimes \dots \otimes f(e_n) ) \not \approx
\S( g(e_1) \otimes g(e_2) \otimes \dots \otimes g(e_n) ),
\]
where $\approx$ is isomorphism of $\S_n$-modules.

\subsection{Which Irreducible Submodules Can Appear in $\CC \otimes U(M)$}
The irreducible representations of a the symmetric group $\S_n$ are
parametrized by partitions of $n$. Given a partition $\lambda$ and a
tableaux $T$ of shape $\lambda$ with content $(1,1,\dots,1)$ we can
construct the irreducible $\S_n$-representation indexed by $\lambda$ by
taking the left or right ideal in $\CC\S_n$ generated by the
\textit{Young symmetrizer}
\[
c_T = \left(\sum_{\sigma \in R_T} \operatorname{sign}(\sigma)
\sigma  \right) \left( \sum_{\tau \in C_T} \tau \right)
\]
where $R_T$ (respectively, $C_T$) is the subgroup of $\S_n$ preserving
the rows (respectively the columns) of $T$. We will say that the
partition $\lambda$ appears in a representation of $\S_n$ if it
contains a submodule isomorphic to the right ideal in $\CC\S_n$
generated by $c_T$. We say that a partition $\lambda$ has multiplicity
$m$ in $U(M)$ if the $c_T \CC\S_n$-isotypic component of $U(M)$ is
isomorphic to a direct sum of exactly $m$ copies of $c_T \CC\S_n$.

\begin{remark}
  Our indexing of the irreducible representations of $\S_n$ is the
  conjugate of the usual indexing. For example, $(n)$ corresponds to
  the sign representation and $(1^n)$ corresponds to the trivial
  representation.
\end{remark}
It follows from the our discussion above that if $f:E\to V$ is
realization of $M$ in a complex vector space $V$, and $\lambda$
appears in
\[
\S( f(e_1) \otimes f(e_2) \otimes \dots \otimes f(e_n) )
\]
then $\lambda$ appears in $\CC \otimes U(M)$ with positive
multiplicity.

The following result is equivalent to Gamas's Theorem on the vanishing
of symmetrized tensors (see \cite{berget}).
\begin{theorem}[Berget {\cite{berget}}]
  Let $f:E \to V$ be a realization of a matroid $M$ in a complex
  vector space $V$. A partition $\lambda$ appears in
  \[
  \S( f(e_1) \otimes f(e_2) \otimes \dots \otimes f(e_n) )
  \]
  if and only if there is a set partition of $E$ into independent sets
  whose sizes are the part sizes of $\lambda$.
\end{theorem}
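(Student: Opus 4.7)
The plan is to reduce the claim to a classical theorem of Gamas on non-vanishing of symmetrized tensors, and then translate linear independence of vectors into matroid independence via the realization $f$.

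Write $v_i = f(e_i)$ and $v = v_1 \otimes v_2 \otimes \cdots \otimes v_n \in V^{\otimes n}$. By definition $\S(v) = v \cdot \CC \S_n$, the cyclic right $\S_n$-submodule generated by $v$. The first step is the standard observation from semisimple representation theory that the isotypic component of shape $\lambda$ in $v \cdot \CC \S_n$ is nonzero if and only if $v \cdot c_T \neq 0$ for some tableau $T$ of shape $\lambda$ with distinct entries drawn from $\{1,\dots,n\}$; this is because $c_T \CC\S_n \cong V_\lambda$ and the map $c_T x \mapsto v c_T x$ extends to an $\S_n$-equivariant surjection onto a copy of $V_\lambda$ whenever it is not the zero map, while varying the filling of $T$ corresponds to conjugating $c_T$ inside $\CC \S_n$. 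This reduces the question of which $\lambda$ appear to the question of which Young symmetrizers fail to annihilate $v$.

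Next I would invoke Gamas's theorem in the following form: for vectors $v_1,\dots,v_n \in V$ and a filled tableau $T$ of shape $\mu$, the element $v \cdot c_T$ (where $c_T$ first antisymmetrizes along rows and then symmetrizes along columns, matching this paper's convention) is nonzero if and only if, for each row of $T$, the $v_i$'s indexed by the entries of that row are linearly independent in $V$. The classical Gamas statement is phrased in the conjugate convention and gives the condition along columns; here the swap of antisymmetrizer and symmetrizer just exchanges rows with columns. Combining this with the first step, $\lambda$ appears in $\S(v)$ if and only if there exists a filling of shape $\lambda$ whose rows partition $\{v_1,\dots,v_n\}$ into linearly independent subsets, and since the rows of a shape-$\lambda$ tableau have lengths $\lambda_1, \lambda_2, \dots$, this is equivalent to the existence of a partition of $\{1,\dots,n\}$ into linearly independent blocks of the prescribed sizes.

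Finally, because $f$ is a realization of $M$, a subset of $\{v_1,\dots,v_n\}$ is linearly independent in $V$ if and only if the corresponding subset of $E$ is independent in the matroid $M$. The condition from Gamas therefore translates verbatim into the existence of a set partition of $E$ into independent sets of sizes $\lambda_1, \lambda_2, \dots$. The substantive work in this plan sits entirely inside Gamas's theorem, specifically in the direction ``existence of an independent-block partition implies non-vanishing of the symmetrized tensor'', which typically requires an inductive construction of a test functional pairing nontrivially with $v \cdot c_T$; this is the step I expect to be the main obstacle, and it is presumably where one would cite or adapt the argument of \cite{berget}. The reverse direction is essentially bookkeeping, since in the expansion of $v \cdot c_T$ any surviving signed summand over a row of $T$ already forces the $v_i$'s indexed by that row to be linearly independent, and the remaining translations between conjugation conventions and between linear and matroid independence are routine.
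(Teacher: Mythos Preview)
Your proposal is correct and matches the paper's treatment: the paper does not prove this theorem but simply cites it from \cite{berget}, noting explicitly that it ``is equivalent to Gamas's Theorem on the vanishing of symmetrized tensors.'' Your reduction to Gamas's criterion, followed by the translation from linear independence to matroid independence via the realization $f$, is exactly the intended route, and your identification of the non-vanishing direction of Gamas as the substantive step (to be cited rather than reproved) is accurate.
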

\begin{corollary}\label{cor:gamas}
  Let $M$ be a matroid realizable over $\CC$. The partition $\lambda$
  appears in $U(M)$ if and only if $\lambda \leq \rho(M)$ in dominance
  order.
\end{corollary}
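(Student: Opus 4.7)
The plan is to derive this corollary by combining Berget's theorem (just stated) with Dias da Silva's theorem (Theorem~\ref{thm:dds}), using the surjection $\CC\otimes U(M)\twoheadrightarrow \S(f(e_1)\otimes\cdots\otimes f(e_n))$ of $\CC\S_n$-modules recalled immediately before the Berget theorem.

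For the ``if'' direction, suppose $\lambda\le\rho(M)$. Theorem~\ref{thm:dds} yields a partition of the ground set $E$ into independent sets whose sizes are the parts of $\lambda$. Berget's theorem then places $\lambda$ in $\S(f(e_1)\otimes\cdots\otimes f(e_n))$ for any complex realization $f$ of $M$, and pulling back along the surjection puts $\lambda$ in $\CC\otimes U(M)$ with positive multiplicity.

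For the converse, I would work directly in the doubly multilinear component of $\Super(L^-|P^+)\otimes\CC$, which by Theorem~\ref{thm:std basis} has a basis of elements $\tab(T_r|T_c)$ where $T_r$ and $T_c$ are standard Young tableaux of the same shape, and which carries the structure of an $\S_n\times\S_n$-bimodule isomorphic to $\bigoplus_\lambda V_\lambda\otimes V_\lambda$ with the right factor acting by place permutation. Theorem~\ref{thm:gamas} annihilates in $W(M)$ precisely those bimodule summands indexed by a $T_r$ having some dependent row, so the multiplicity of $V_\lambda$ in $U(M)\otimes\CC$ equals the number of standard Young tableaux of shape $\lambda$ whose rows all index independent subsets of $M$. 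The corollary following Theorem~\ref{thm:dds}, applied with content $\mu=(1^n)$ so that $M_\mu=M$, shows that such a tableau of shape $\lambda$ exists if and only if $\lambda\le\rho(M)$, completing the converse.

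The principal obstacle is justifying the bimodule decomposition after passage to the quotient $W(M)$. I expect this to go through the dominance filtration $F_\lambda\subseteq U(M)\otimes\CC$ spanned by standard tableaux of shape dominating $\lambda$, which is $\S_n$-stable thanks to the shape-monotonicity in Theorem~\ref{thm:std basis}. The associated graded then inherits the claimed $\S_n\times\S_n$-bimodule structure from $\Super(L^-|P^+)\otimes\CC$ modulo the summands that vanish by Theorem~\ref{thm:gamas}, and semisimplicity over $\CC$ lifts the decomposition back to $U(M)\otimes\CC$.
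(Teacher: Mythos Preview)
Your ``if'' direction matches the paper's and is correct.

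The converse, however, contains a genuine error. You assert that the multiplicity of $V_\lambda$ in $\CC\otimes U(M)$ equals the number of standard Young tableaux of shape $\lambda$ whose rows all index independent sets. This is false, and the paper itself gives a counterexample in Section~\ref{sec:universal}: for the rank-$3$ matroid on six elements displayed there, the multiplicity of the hook $\lambda^3$ is $7$ (by Theorem~\ref{thm:hook}), while the number of standard Young tableaux of shape $\lambda^3$ with independent first row is at least $8$. The mistake is in the sentence ``Theorem~\ref{thm:gamas} annihilates in $W(M)$ precisely those bimodule summands indexed by a $T_r$ having some dependent row.'' Theorem~\ref{thm:gamas} tells you which \emph{individual} standard tableaux die in $W(M)$; it says nothing about linear combinations of the surviving ones, and indeed the surviving tableaux are in general linearly dependent in $W(M)$. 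Your filtration argument in the last paragraph does not fix this: the associated graded piece at $\lambda$ is a \emph{quotient} of $V_\lambda\otimes V_\lambda$, not a sub-sum of basis summands.

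Fortunately, you only need the inequality. The independent-row standard tableaux span $U(M)\otimes\CC$, so your dominance filtration shows the $\lambda$-graded piece is a quotient of $(V_\lambda)^{\oplus m}$ with $m$ the number of such tableaux of shape $\lambda$; when $m=0$ the piece vanishes and $\lambda$ cannot appear. That suffices for the corollary. The paper's own argument is more direct: it observes that applying a row antisymmetrizer to $(e_1|p_1)\cdots(e_n|p_n)$ produces a factor $(e_{i_1}\cdots e_{i_k}|p_{i_1}\cdots p_{i_k})$, which is zero whenever that row is dependent, so $c_T$ kills the generator whenever $T$ has a dependent row; summing $\sigma c_T\sigma^{-1}$ over $\sigma$ gives the isotypic projector, and if every $T$ of shape $\lambda$ has a dependent row the projector annihilates $U(M)$.
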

\begin{proof}
  One direction follows immediately from the previous theorem and Dias
  da Silva's Theorem~\ref{thm:dds}. It remains to prove the
  converse. The image of an antisymmetrizer
  $$\sum_{\sigma \in \S_k} \operatorname{sign}\sigma \in \CC\S_n,$$
  where $k \leq n$, on
  $(e_1 | p_1 )(e_2 | p_2) \dots (e_n | p_n)$
  is 
  \[
  (e_1e_2 \dots e_k | p_1 p_2 \dots p_k ) (e_{k+1}|p_{k+1} ) \dots
  (e_n | p_n).
  \]
  It follows that if some row of $T$ indexes a dependent set of $M$
  then $c_T$ applied to $(e_1 | p_1 )(e_2 | p_2) \dots (e_n | p_n)$ is
  zero. Since the projector of an $\S_n$-module to its $\lambda$-th
  isotypic component is, up to a scalar,
  \[
  \sum_{\sigma \in \S_n} \sigma c_T \sigma^{-1}
  \]
  we conclude that if every tableaux of shape $\lambda$ has a row
  indexing a dependent set of $M$ then $\lambda$ cannot appear in
  $\CC \otimes U(M)$.
\end{proof}

\subsection{Multiplicities of Hook Shapes}
A hook is a partition with at most one part not equal to one. Let
$\lambda^k$ denote the hook whose first part is $k$, and all other
parts are equal to one. In this subsection, we show how the
multiplicities of the irreducible $\S_n$ representations indexed by
hook shapes are related to the no broken circuit complex of $M$. The
results of this subsection hold for any matroid, regardless of
realizability.

To ease notation for the rest of this section, we assume that the
ground set of $M$ is $E = \{1,2,\dots,n\}$. We define a \textit{broken
  circuit} of $M$ as a circuit with its smallest element deleted. A
subset of the ground set of $M$ is said to be \textit{nbc} if it
contains no broken circuits of $M$. The collection of nbc sets of $M$
is a  simplicial complex called the \textit{nbc complex} of
$M$.

\begin{theorem}\label{thm:hook}
  The multiplicity $\lambda^k$ in $U(M)$ is the number of nbc sets of
  $M$ of size $k$ which contain the ground set element $1$.
\end{theorem}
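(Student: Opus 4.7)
The plan is to compute the multiplicity of $\lambda^k$ in $U(M)$, which — as explained in the text before the theorem — equals $\dim U(M)\, c_T$ for any tableau $T$ of shape $\lambda^k$. Since $U(M)$ is cyclic as a right $\S_n$-module, generated by $\xi_0 := (e_1|p_1)\cdots (e_n|p_n)$, I will exhibit a basis of $U(M)\, c_T$ indexed by nbc sets of size $k$ containing $1$. For each such candidate set $A = \{1 = a_1 < a_2 < \cdots < a_k\}$, let $T_A$ denote the tableau of shape $\lambda^k$ whose first row is $A$ and whose first column is $(1, b_1, \ldots, b_{n-k})$, where $[n]\setminus A = \{b_1 < \cdots < b_{n-k}\}$; set $v_A := \xi_0\, c_{T_A}$. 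Since $c_{T_A}$ and $c_T$ are conjugate in $\CC\S_n$, each $v_A$ transfers (by multiplication by a permutation) to an element in $U(M)\, c_T$, and the plan is to show that the resulting family is a basis as $A$ ranges over nbc sets.

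Applying the row antisymmetrizer of $c_{T_A}$ (over the places in $A$) to $\xi_0$ yields the Laplace element
\[ \eta_A := (e_A \mid p_A)\prod_{j\notin A}(e_j\mid p_j), \]
which by the defining relations of $W(M)$ vanishes whenever $A$ is dependent. Thus $v_A = 0$ for dependent $A$. For independent $A$, the column symmetrizer over $\{1\}\cup([n]\setminus A)$ then expands $v_A$ into a sum whose identity-term summand is exactly $\eta_A$, while the other summands shuffle the corner place $p_1$ with the places outside $A$, producing Laplace pairings of the form $(e_A \mid p_{A'})\prod (e_j|p_{j'})$ whose letter index on the non-singleton factor remains $A$.

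For spanning, I invoke the classical nbc straightening. If an independent $A$ contains a broken circuit $B = C\setminus\min(C)$, then the matroid relation $(e_C|p_C) = 0$ in $W(M)$, expanded via rule R4 and multiplied by the complementary letter-place pairs, produces the Orlik--Solomon-type syzygy expressing $\eta_A$ as a $\ZZ$-linear combination of $\eta_{A'}$ for $A'$ lex-smaller than $A$. Applying the column symmetrizer of $c_{T_A}$ term-by-term transports this to a relation among the $v_A$'s, and iterating writes every $v_A$ as a combination of $v_{A'}$'s with $A'$ nbc.

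Linear independence of $\{v_A : A \text{ nbc}\}$ will follow from a leading-term argument using the standard basis of $\Super(L^-|P^+)$ from Theorem~\ref{thm:std basis}: the identity summand of $v_A$ is $\eta_A = \tab(T_A|T_A)$, a distinct standard tableau, and by the observation above no $v_{A'}$ with $A'\neq A$ nbc can produce $\eta_A$ in its expansion (since that would require the non-singleton Laplace factor to have letter index $A$, forcing $A'=A$). The main obstacle is reconciling the straightening with the $A$-dependent column symmetrizer: each summand of $v_A$ must be straightened in a way that respects the symmetrizer, necessitating a double induction on the lex-depth of $A$ and the number of broken circuits it contains. This parallels the proof of the nbc basis theorem for the Orlik--Solomon algebra, here lifted to the hook multiplicity space of $U(M)$.
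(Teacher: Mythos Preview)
Your plan has the right shape---identify a candidate family $\{v_A\}$ indexed by size-$k$ subsets containing $1$, kill the dependent ones, straighten the non-nbc ones away, and prove the nbc ones are independent---but both the spanning step and the independence step have real gaps.

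\textbf{Spanning.} The claimed Orlik--Solomon syzygy among the $\eta_A$ does not follow from $(e_C\mid p_C)=0$. Expanding $(e_{A\cup c_0}\mid p_{A\cup c_0})=0$ via rule~R$4'$ and multiplying by the remaining $(e_j\mid p_j)$ does produce $\pm\eta_A$ as one term, but the other terms are of the form $(e_a\mid p_{c_0})(e_{(A\setminus a)\cup c_0}\mid p_A)\prod(e_j\mid p_j)$, with place word $p_A$ rather than $p_{(A\setminus a)\cup c_0}$. These are \emph{not} the elements $\eta_{(A\setminus a)\cup c_0}$, and applying the column symmetrizer of $c_{T_A}$ (which is tied to $A$, not to $(A\setminus a)\cup c_0$) does not convert them into $v_{(A\setminus a)\cup c_0}$. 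The exchange relation you need really lives in $\CC\S_n$, not in $W(M)$: the paper proves the group-algebra identity $c_S=\sum_{f\in S}c_{S\cup e-f}(ef)$ for any $e\notin S$, which yields the straightening without ever touching places.

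\textbf{Linear independence.} This is the more serious gap. Your leading-term argument shows that the \emph{lifts} of the $v_A$ to $\Super(L^-|P^+)$ are linearly independent, because the $\eta_A$ are distinct standard basis elements there. But the $v_A$ live in $U(M)\subset W(M)$, a quotient of $\Super(L^-|P^+)$, and you give no reason why a dependence cannot appear after passing to the quotient. Indeed the remark following the example in Section~\ref{sec:universal} says exactly that the standard tableaux with independent rows are \emph{not} linearly independent in $W(M)$ in general, so a naive leading-term argument in $\Super(L^-|P^+)$ cannot suffice. The paper handles this by working entirely inside $\CC\S_n$ via the identification $U(M)\cong\CC\S_n/\langle b_D\rangle$, and then proving directly (through a sequence of reductions on unicyclic dependent sets and external activity, adapted from Las~Vergnas--Forge) that $\langle c_B:B\text{ nbc}\rangle\cap\langle c_D:D\text{ dependent}\rangle=0$. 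That intersection statement is the actual content you are missing.
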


\begin{example}
  Let $M$ be the matroid realizable over $\CC$ be the columns of the
  matrix
  \[
  \begin{bmatrix}
    1 & 0 & 0 & 1 & 1 & 0 \\
    0 & 1 & 0 & 1 & 0 & 1 \\
    0 & 0 & 1 & 0 & 1 & 1
  \end{bmatrix}
  \]
  Label the columns $1,2,\dots,6$, left to right.  The circuits of
  size three of $M$ are $124,136,235$, where $ijk$ denotes
  $\{i,j,k\}$, hence the broken circuits of $M$ are $24,36,35$. It
  follows that the no broken circuit sets of size $3$ are
  \[
  123,125,126,134,145,146,156
  \]
  and so the multiplicity of $\lambda^3$ in $\CC \otimes U(M)$ is
  $7$. For any ordering of the ground set of $M$, the smallest element
  is in at most two dependent sets of size $3$. It follows that for
  any ordering of the ground set, the number of standard Young
  tableaux of shape $\lambda^3$ whose first row indexes an independent
  set of $M$ is at least $8$.
\end{example}

\begin{remark}
  Even for matroids realizable over $\CC$ the standard basis theorem
  of Theorem~\ref{thm:whitney-freedom} does not hold. Indeed the
  previous example proves that the obvious spanning set of hook shaped
  tableaux $\tab(T_r|T_c) \in W(M)$ where $T_r$ has independent rows
  cannot be linearly independent.
\end{remark}

The nbc sets of $M$ with size $k$ that contain $1$ are precisely the
nbc bases of the truncation of $M$ to rank $k$. By the proof of
Lemma~\ref{lem:truncation} it is sufficient to prove the result when
$k$ is equal to the rank of $M$.

\begin{definition}
  If $D$ is a subset of $[n]$ we let $b_D \in \CC\S_n$ denote the
  antismmetrizer of the set $D$, i.e., $b_D = \sum_{\sigma \in \S_D}
  \operatorname{sign}(\sigma) \sigma$.

  For a given set $B \subset [n]$ we let $c_B$ denote the Young
  symmetrizer of the tableaux of shape $\lambda^{|B|}$ that has the
  elements of $B$ in its first row and the remaining elements $[n]-B$
  in the rows rows.
\end{definition}
It follows directly from the definition of the Young symmetrizer that
$b_B$ is a left factor of $c_B$.
\begin{proposition}
  Let $\<-\>$ denote taking the right ideal in $\CC\S_n$ generated by
  the elements $-$. There is an isomorphism of $\CC\S_n$-modules
  \[
  U(M) \approx \CC\S_n/ \< b_D : D \textup{ indexes a dependent set of
  }M\>
  \]
  induced by the map that sends $(e_1|p_1) (e_2 | p_2) \dots (e_n |
  p_n)$ to the image of $1$ in the quotient.
\end{proposition}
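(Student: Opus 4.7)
The plan is to identify the doubly multilinear piece $\Sigma^{(1^n;1^n)}$ of $\Super(L^-|P^+) \otimes \CC$ with the regular representation $\CC\S_n$ as right $\S_n$-modules, and then to match the kernel of the projection onto $U(M)$ with the right ideal $\<b_D : D \text{ dependent}\>$. Writing $u = (e_1|p_1)\cdots(e_n|p_n)$, every doubly multilinear monomial in $\Sigma$ can be placed, by repeatedly using the anticommutativity of negative letter--place pairs, into the canonical form $\pm u \cdot \sigma$ for a unique $\sigma \in \S_n$. This yields a right $\CC\S_n$-module isomorphism $\Phi\colon \Sigma^{(1^n;1^n)} \to \CC\S_n$ with $\Phi(u)=1$ (the standard basis theorem independently confirms that both sides have rank $\sum_{\lambda \vdash n}(f^\lambda)^2 = n!$). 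Since $U(M) = \Sigma^{(1^n;1^n)}/I$, where $I$ is the doubly multilinear part of the Whitney ideal, the proposition reduces to showing $\Phi(I) = \<b_D : D \text{ dependent}\>$ as right ideals in $\CC\S_n$.

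The inclusion $\<b_D : D \text{ dependent}\> \subseteq \Phi(I)$ is the easy direction. For a dependent $D$, applying $b_D$ to $u$ antisymmetrizes the places indexed by $D$ and fixes the remaining ones. Rule R$4'$ of the Laplace pairing, applied to $w = \prod_{d \in D} e_d$ paired against $\prod_{d \in D} p_d$, collects this antisymmetrized sum into $\pm (e_D|p_D) \cdot \prod_{i\notin D}(e_i|p_i)$. Since $D$ is dependent, $(e_D|p_D) = 0$ in $W(M)$ by definition of the Whitney ideal, so $u \cdot b_D = 0$ in $U(M)$, whence $b_D \in \Phi(I)$.

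For the reverse inclusion, every doubly multilinear element of the Whitney ideal is a linear combination of generators of the form $(e_D|p_E) \cdot \prod_{i\notin D}(e_i|p_{\pi(i)})$, where $D$ is a dependent subset of $[n]$, $E \subseteq [n]$ has $|E|=|D|$, and $\pi\colon [n]\setminus D \to [n]\setminus E$ is a bijection; any generator $(e_D|p^{(\alpha)}) \cdot y$ with $\alpha$ not 0-1 cannot contribute to the doubly multilinear piece, and the remaining factors can be commuted to the right at the cost of a sign. Expanding $(e_D|p_E)$ by rule R$4'$ yields a signed sum over bijections $D \to E$; once combined with $\prod_{i\notin D}(e_i|p_{\pi(i)})$ and reordered into the canonical letter order used by $\Phi$, the result corresponds to $\pm b_D \cdot \rho$ for a permutation $\rho \in \S_n$ determined by $(E,\pi)$. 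Thus every generator of $I$ maps under $\Phi$ into $\<b_D : D \text{ dependent}\>$, giving the desired equality.

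The main obstacle is the sign bookkeeping in the last step: reconciling the signs produced by iterated applications of R$4'$ with those introduced by $\Phi$ when canonicalizing the letter order. Conceptually the correspondence is clean because $b_D$ in $\CC\S_n$ is precisely the antisymmetrizer that turns a product of single letter--place pairs into the Laplace pairing $(e_D|p_D)$, but the explicit verification that each generator of $I$ becomes a right multiple of $b_D$ (rather than merely something in the two-sided ideal) requires care to track how transpositions of adjacent negative factors propagate through the expansion.
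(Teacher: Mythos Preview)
Your proof is correct and follows the same approach as the paper's, which simply observes that $1 \mapsto u$ gives an isomorphism of $\CC\S_n$ onto the doubly multilinear piece of $\Super(L^-|P^+)$ and that $b_D$ corresponds (up to sign) to the hook tableau $\tab(T|S)$ with first row $D$ in both $T$ and $S$. Your treatment is in fact more thorough than the paper's very terse argument: you make explicit the reverse inclusion $\Phi(I)\subseteq\langle b_D\rangle$, and your identification of each doubly multilinear Whitney-ideal generator with $\pm b_D\cdot\rho$ is exactly the verification the paper leaves to the reader.
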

\begin{proof}
  The map that sends $1 \in \CC\S_n$ to $(e_1|p_1) (e_2 | p_2) \dots
  (e_n | p_n) \in \Super(L^-|P^+)$ is an isomorphism onto its
  image. One then verifies that, up to a sign, the image of the
  antisymmetrizer $b_D$ is the tableaux $\tab(T|S)$ of hook shape
  where the first row of $T$ and $S$ consists of the numbers in $D$.
\end{proof}
By taking the $\lambda^{r(M)}$-th isotypic component of
the quotient in the above proposition we immediately have the
following result.
\begin{corollary}
  Let $\<-\>$ denote taking the right ideal in $\CC\S_n$ generated by
  the elements $-$. There is an $\CC\S_n$-isomorphism between the
  $\lambda^{r(M)}$-th isotypic component of $\CC\otimes U(M)$ and the
  quotient
  \[
  \<c_B : B \text{ is any set of size }r(M) \>/ \<c_D : D \text{ is a
    dependent set of }M \>
  \]
\end{corollary}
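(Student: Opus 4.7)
The strategy is to apply the exact functor of taking the $\lambda^{r(M)}$-isotypic component to the isomorphism $U(M) \cong \CC\S_n / J$ from the preceding Proposition, where $J = \<b_D : D \text{ dependent}\>$ is a right ideal. I first identify the $\lambda^{r(M)}$-isotypic component of the regular representation as $I := \<c_B : |B| = r(M)\>$: each $c_B \CC\S_n$ is a minimal right ideal of type $V_{\lambda^{r(M)}}$, and together they span the entire $\lambda^{r(M)}$-isotypic piece of $\CC\S_n$. Since the isotypic functor is exact (it is projection by the central idempotent $z_{\lambda^{r(M)}}$), the corollary reduces to the equality of right ideals
\[
I \cap J = \<c_D : D \text{ dependent},\; |D| = r(M)\>.
\]

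The inclusion $\supseteq$ is immediate from the Young-symmetrizer factorization $c_D = b_D \cdot a_D$, where $a_D$ denotes the symmetrizer over the first column of the hook tableau used to define $c_D$: this places $c_D$ in $b_D \CC\S_n \subseteq J$, while $c_D \in I$ by virtue of its shape. For the reverse inclusion I would decompose $J = \sum_{D \text{ dep}} b_D \CC\S_n$ and study each summand via the right-module isomorphism $b_D \CC\S_n \cong \mathrm{Ind}_{\S_D}^{\S_n}\mathrm{sgn}$. Frobenius reciprocity together with the dual Pieri rule applied to the hook $\lambda^{r(M)} = (r(M),1^{n-r(M)})$ shows that the multiplicity of $V_{\lambda^{r(M)}}$ in $b_D \CC\S_n$ is zero when $|D| > r(M)$ and equals $\binom{n - |D|}{r(M) - |D|}$ otherwise. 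Using the identity $b_D b_{D'} = |D|!\, b_{D'}$ valid whenever $D \subseteq D'$, one obtains $b_{D'} \in b_D \CC\S_n$, and hence $c_{D'} = b_{D'} a_{D'} \in b_D \CC\S_n$ for every $D' \supseteq D$ of size $r(M)$; since every such $D'$ is automatically dependent, this gives $\sum_{D' \supseteq D,\,|D'| = r(M)} c_{D'}\CC\S_n \subseteq b_D \CC\S_n \cap I$.

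The main obstacle is promoting this containment to an equality when $|D| < r(M)$: one must show that the $\binom{n-|D|}{r(M)-|D|}$ submodules $c_{D'}\CC\S_n$ (ranging over the supersets $D'$ of $D$ of size $r(M)$) are linearly independent in $I$, so that their dimension total matches the Frobenius multiplicity computed above via the Pascal identity $\binom{n - |D| - 1}{r(M) - |D|} + \binom{n - |D| - 1}{r(M) - |D| - 1} = \binom{n - |D|}{r(M) - |D|}$. Granted this independence, one has $b_D \CC\S_n \cap I = \sum_{D' \supseteq D,\,|D'|=r(M)} c_{D'}\CC\S_n$, and summing over all dependent $D$ yields the equality $I \cap J = \<c_D : D\text{ dep},\,|D|=r(M)\>$, which via the reduction in the first paragraph furnishes the claimed $\CC\S_n$-isomorphism.
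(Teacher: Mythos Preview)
Your overall strategy is exactly what the paper has in mind: the paper's entire argument is the single sentence ``by taking the $\lambda^{r(M)}$-th isotypic component of the quotient in the above proposition we immediately have the following result.'' You have correctly unpacked this into the identity $I\cap J=\langle c_D:D\text{ dependent},\ |D|=r(M)\rangle$, proved the easy containment, and isolated the remaining issue. So your proof is more careful than the paper's, not different from it.

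The gap you flag is genuine but easily closed, and in fact the missing ingredient is precisely the ``well known fact'' the paper itself invokes a page later. Fix any $i\in D$ (possible since $D$ is dependent, hence nonempty). Every superset $D'\supseteq D$ of size $r(M)$ then contains $i$, so your family $\{c_{D'}\CC\S_n\}$ is a subfamily of $\{c_B\CC\S_n:\ i\in B,\ |B|=r(M)\}$. For a hook tableau the right ideal $c_T\CC\S_n$ depends only on the \emph{set} in the first row: if $T$ and $T'$ differ by swapping which element of $B$ sits in the corner, then $c_{T'}=(bb')c_T(bb')=-c_T(bb')$, so $c_{T'}\CC\S_n=c_T\CC\S_n$. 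Consequently left conjugation by any $\sigma\in\S_n$ sends $c_B\CC\S_n$ to $c_{\sigma(B)}\CC\S_n$, and choosing $\sigma(i)=1$ carries your family bijectively to a subfamily of $\{c_B\CC\S_n:1\in B,\ |B|=r(M)\}$. These are exactly the Young symmetrizers of \emph{standard} hook tableaux, and their right ideals are independent (this is the fact the paper uses in the final paragraph of the hook-multiplicity proof). Independence is preserved under the invertible left multiplication by $\sigma$, so your $\binom{n-|D|}{r(M)-|D|}$ ideals $c_{D'}\CC\S_n$ are independent, and your dimension count finishes the job.

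With this paragraph inserted, your argument is complete and fills in what the paper leaves implicit.
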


\begin{lemma}
  Let $S$ be any set of $[n]$, from which we form the Young
  symmetrizer of the tableaux of shape $\lambda^{|S|}$ that has the
  elements of $S$ in its first row. In $\CC\S_n$ we have the equality
  \[
  c_S = \sum_{f \in S} c_{S \cup e -f}(ef)
  \]
  where $e$ is any element not in $S$.
\end{lemma}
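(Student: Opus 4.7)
The plan is to unpack the Young symmetrizer as $c_S = b_S\,a_{K_S}$, where $b_S = \sum_{\sigma \in \S_S}\operatorname{sign}(\sigma)\sigma$ is the antisymmetrizer of the row entries $S$ and $a_{K_S} = \sum_{\tau \in \S_{K_S}}\tau$ is the symmetrizer of the first-column entries $K_S$, and then compare the two sides term by term using two elementary facts about the symmetric group.

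The first ingredient is the conjugation formula $b_{S \cup \{e\} \setminus \{f\}} = (ef)\,b_S\,(ef)$ for $f \in S$ and $e \notin S$, reflecting that the transposition $(ef)$ carries $\S_S$ bijectively onto $\S_{S \cup \{e\} \setminus \{f\}}$. The second ingredient is an absorption property: if the hook tableau used to form $c_{S \cup \{e\} \setminus \{f\}}$ is chosen with $e$ in the corner, then the first-column set $K_{S \cup \{e\} \setminus \{f\}}$ equals $([n] \setminus S) \cup \{f\}$, a set containing both $e$ and $f$; consequently $(ef) \in \S_{K_{S \cup \{e\} \setminus \{f\}}}$, so $(ef)\,a_{K_{S \cup \{e\} \setminus \{f\}}} = a_{K_{S \cup \{e\} \setminus \{f\}}} = a_{K_{S \cup \{e\} \setminus \{f\}}}(ef)$. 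Putting these together, each summand on the right-hand side collapses to $(ef)\,b_S\,a_{([n] \setminus S) \cup \{f\}}$, eliminating one of the two transpositions.

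The remaining step is to sum over $f \in S$ and match the result to $b_S\,a_{K_S} = c_S$. This is handled by a Mackey-style coset decomposition of the column symmetrizer, viewing $\S_{K_S}$ as a union of cosets of $\S_{[n] \setminus S}$; concretely, $a_{K_S} = \sum_{g \in K_S}(s_\star, g)\,a_{[n] \setminus S}$, where $s_\star$ is the corner of $S$ and $(s_\star, s_\star) := 1$. After the identifications above, the cosets indexed by $g \in [n] \setminus S$ correspond, under the bijection $g \mapsto f$ induced by the transpositions, to the contributions from the various $f \in S$, and the identity closes.

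The main obstacle is the bookkeeping of corner conventions and the coset matching in the final step: one must verify that, after the absorption, the summands on the right-hand side assemble into the full symmetrizer $a_{K_S}$ on the left, and that the signs introduced by the row antisymmetrizer $b_S$ cancel correctly. Once $e$ is fixed as the corner of every $S \cup \{e\} \setminus \{f\}$, the combinatorics becomes transparent and the identity reduces to a direct verification in $\CC\S_n$.
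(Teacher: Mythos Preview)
There is a genuine gap. Your absorption step hinges on choosing the tableau for $c_{S\cup\{e\}\setminus\{f\}}$ with $e$ in the corner, so that both $e$ and $f$ lie in the first column and $(ef)$ is absorbed by the column symmetrizer. But the lemma is an exact identity among specific elements of $\CC\S_n$, and that choice of corner is not the one for which the identity holds. The convention implicit in the statement (and used in the paper) is that the tableau for $S\cup\{e\}\setminus\{f\}$ is obtained from the tableau for $S$ by applying the transposition $(ef)$; under that convention $e$ sits in the position formerly occupied by $f$, which is the corner only when $f$ happens to be the corner $s_\star$ of $S$. For all other $f$ the corner is still $s_\star$, so $e$ is \emph{not} in the first column, $(ef)\notin \S_{K_{S\cup\{e\}\setminus\{f\}}}$, and your absorption identity $(ef)\,a_{K}=a_{K}$ fails. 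Conversely, if you insist on placing $e$ in the corner for every $f$, the resulting sum is simply not equal to $c_S$: take $n=3$, $S=\{1,2\}$ with corner $s_\star=1$, and $e=3$; then $c_S=1+(13)-(12)-(123)$, whereas your right-hand side evaluates to $2-(123)-(132)$.

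The final ``Mackey-style'' matching is also not a proof as written: after your collapse the right-hand side is $\sum_{f\in S}(ef)\,b_S\,a_{([n]\setminus S)\cup\{f\}}$, a sum of $|S|$ terms each involving a different column group, while your coset decomposition of $a_{K_S}$ has $|K_S|=n-|S|+1$ terms over the \emph{fixed} subgroup $\S_{[n]\setminus S}$; there is no bijection here, and the $(ef)$ on the left does not commute past $b_S$ to align anything. The paper avoids all of this by observing directly that $c_{S\cup\{e\}\setminus\{f\}}(ef)=(ef)c_S$, so the claim becomes $\bigl(1-\sum_{f\in S}(ef)\bigr)c_S=0$; the left factor is, up to a scalar, the antisymmetrizer $b_{S\cup\{e\}}$, and $b_{S\cup\{e\}}c_S=0$ because $b_{S\cup\{e\}}$ kills every irreducible whose shape is not $\geq\lambda^{|S|+1}$ in dominance. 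That representation-theoretic vanishing is the missing idea.
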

\begin{proof}
  We have $c_{S \cup e -f}(ef) = (ef) c_S$ in $\CC\S_n$. It follows
  that we can write this statement as
  \[
  \left(1- \sum_{e \in S}(ef)\right)c_{S} =0
  \]
  We can write this as
  \[
  b_{S \cup e} c_S/|S|!
  \]
  where $b_{S \cup e}$ is the antisymmetrizer of the set $S \cup e$,
  since the antisymmetrizer is near idempotent. It is well known that
  every irreducible that appears in the left or right ideal generated
  by $b_{S \cup e}$ is larger than $\lambda^{|S|+1}$ in dominance
  order. Since $c_S$ generates an irreducible of shape $\lambda^{|S|}
  < \lambda^{|S|+1}$, the product of the two elements must be zero.
\end{proof}
\begin{corollary}
  The quotient
  \[
  \<c_B : B \text{ is any set of size }r(M) \>/ \<c_D : D \text{ is a
    dependent set of }M \>
  \]
  is generated by the Young symmetrizers of the nbc bases of $M$.
\end{corollary}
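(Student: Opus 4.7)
The plan is to reduce $c_B$, for an arbitrary basis $B$ of $M$, modulo the denominator ideal to a $\CC\S_n$-linear combination of $c_{B'}$ for nbc bases $B'$. Note first that any $c_B$ with $B$ dependent of size $r(M)$ lies already in $\<c_D : D \text{ dependent}\>$, so the numerator ideal is generated modulo the denominator by $c_B$ for $B$ a \emph{basis} of $M$.

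Introduce a well-founded total order $\prec$ on the $r(M)$-subsets of $[n]$: sort each subset in decreasing order and compare lexicographically. This is a total order on a finite set, hence well-founded. Suppose $B$ is a basis that is not nbc. Then $B$ contains some broken circuit $C \setminus \{a\}$, where $C$ is a circuit of $M$ with minimum element $a$; in particular $a \notin B$ and $a < f$ for every $f \in C \setminus \{a\}$. Applying the preceding lemma with $S = B$ and $e = a$ gives
\[
c_B \;=\; \sum_{f \in B} c_{B \cup \{a\} - \{f\}}\,(af).
\]
For $f \notin C \setminus \{a\}$, the set $B \cup \{a\} - \{f\}$ still contains the circuit $C$, so the corresponding term lies in $\<c_D : D \text{ dependent}\>$ and may be discarded. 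For $f \in C \setminus \{a\}$, set $B_f' = B \cup \{a\} - \{f\}$. Either $B_f'$ is dependent (and the term again lies in the denominator) or $B_f'$ is a basis; in the latter case, since $a < f$, we have $B_f' \prec B$.

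Hence, modulo $\<c_D : D \text{ dependent}\>$, the element $c_B$ is a $\CC\S_n$-linear combination of $c_{B'}$ for bases $B' \prec B$. By well-founded induction on $\prec$, every $c_B$ with $B$ a basis of $M$ is equivalent modulo the denominator to a $\CC\S_n$-linear combination of $c_{B'}$ for nbc bases $B'$, which is the claim. The main obstacle is simply to set up the correct well-founded order so that the lemma's reduction always produces strictly $\prec$-smaller bases (or dependent sets); this is ensured by the minimality condition $a = \min C$, which forces $a < f$ for every $f \in C \setminus \{a\}$ and therefore makes the replacement $f \mapsto a$ strictly decrease the sorted-decreasing sequence.
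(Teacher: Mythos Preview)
Your argument is correct and follows the same route as the paper: apply the preceding lemma with $e=\min C$ for a circuit $C$ whose broken circuit lies in $B$, discard the terms indexed by dependent sets, and iterate until only nbc bases remain. Your well-founded reverse-lexicographic order is in fact a cleaner choice than the paper's stated induction on the number of broken circuits contained in $B$, since that count need not strictly decrease under a single application of the lemma (for instance, in $U_{2,4}$ with $a=2$ the basis $\{3,4\}$ reduces to $\{2,4\}$ and $\{2,3\}$, each still containing exactly one broken circuit), whereas swapping $f$ for $a<f$ always strictly lowers your order.
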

\begin{proof}
  Inducting on the number broken circuits contained in a given base,
  the proof follows at once from the lemma.
\end{proof}
To prove the what remains of the theorem, we will show that the ideal
generated by the Young symmetrizers of nbc bases does not meet the
ideal generated by the Young symmetrizers of dependent sets. That is,
we prove that
\[
\< c_B : B\textup{ is an nbc basis of } M\> \cap \< c_D : D \textup{
  is a dependent set of }M\> = 0.
\]
This will be done by straightening the latter Young symmetrizers into
sums of Young symmetrizers of standard Young tableaux, which in turn
is accomplished by a series of reductions. In the end, the proof comes
down to the well known fact that the right ideals generated by Young
symmetrizers of standard Young tableaux have null intersection.

For the rest of this section $D$ will denote a dependent set of $M$.
\begin{claim}
  We have
  \[
  \< c_D: D \textup{ contains two circuits}\> \subset \< c_D: 1 \in D\>.
  \]
\end{claim}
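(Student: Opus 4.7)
The plan is to apply the preceding lemma with the specific choice $e = 1$, which will directly reduce the claim to a single matroid-theoretic observation. If $1 \in D$ there is nothing to prove, so assume $1 \notin D$. Taking $S = D$ and $e = 1$ in the lemma yields
\[
c_D = \sum_{f \in D} c_{D \cup \{1\} - \{f\}}\,(1\,f).
\]
Set $D_f := D \cup \{1\} - \{f\}$. Each $D_f$ contains $1$, so as soon as we know that $D_f$ is dependent in $M$ for every $f \in D$, the element $c_{D_f}$ is one of the generators of the right ideal $\<c_{D'} : 1 \in D'\>$, and hence so is the right multiple $c_{D_f}(1\,f)$. Summing, $c_D$ lies in $\<c_{D'} : 1 \in D'\>$, as claimed.

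The one matroid verification is that every $D_f$ is dependent, and this is where the hypothesis that $D$ contains two circuits is used. Fix two distinct circuits $C_1, C_2 \subseteq D$. For any $f \in D$, if $f \notin C_i$ for some $i \in \{1,2\}$, then $C_i \subseteq D - \{f\} \subseteq D_f$ and we are done. Otherwise $f \in C_1 \cap C_2$, and the circuit elimination axiom produces a circuit $C \subseteq (C_1 \cup C_2) - \{f\} \subseteq D - \{f\} \subseteq D_f$, again exhibiting a dependency.

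There is no real obstacle here beyond noticing that the lemma, stated for an arbitrary $e \notin S$, is precisely the tool for trading the element $1$ into the set. After that the content is the elementary observation that two distinct circuits cannot both be destroyed by deleting a single common element, which is immediate from circuit elimination. A single circuit in $D$ would not suffice (removing its sole element would leave an independent set), which explains why the hypothesis is phrased in terms of \emph{two} circuits.
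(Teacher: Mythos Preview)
Your proof is correct and follows the paper's own argument essentially verbatim: apply the lemma with $e=1$ to write $c_D=\sum_{f\in D} c_{D\cup 1-f}(1f)$, then check that each $D\cup 1-f$ is dependent. The only difference is that the paper asserts ``$D-e$ is dependent'' without justification, whereas you spell out the circuit-elimination step that makes this true when $D$ contains two circuits.
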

\begin{proof}
  If $1 \notin D$ and $D$ contains more than one circuit then each of
  the sets $D-e \cup 1$ is dependent since $D-e$ is. We have $c_D =
  \sum_{e \in D} c_{D-e \cup 1}(1e)$ which proves the result.
\end{proof}
The remainder of the proof is adapted from Las Vergnas and Forge
\cite{lasvergnas-forge}. We call a set unicyclic if it contains a
unique circuit. Using the circuit elimination axioms it can be shown
that $D$ is is unicyclic if and only if it contains an element $e$
such that $D-e$ is independent. The proof of the following claim is
exactly the same as the proof of the previous one.
\begin{claim}
  Let $cl(D)$ denote the closure of $D$ in $M$. We have the inclusion,
  \[
  \< c_D: D \textup{ unicyclic, }1 \in cl(D)\> \subset \< c_D: 1 \in D
  \textup{ dependent}\>.
  \]
\end{claim}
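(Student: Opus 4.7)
The plan is to imitate the proof of the preceding claim by invoking the lemma with $S = D$ and $e = 1$. If $1 \in D$ already then $c_D$ is itself a generator of the target ideal, so we may assume $1 \notin D$. The lemma then yields
\[
c_D = \sum_{f \in D} c_{D \cup 1 - f}(1f),
\]
and the claim will follow once we show that every set $D \cup 1 - f$ with $f \in D$ is dependent: each such set contains $1$, so each $c_{D \cup 1 - f}$ is a generator of $\<c_D : 1 \in D \text{ dependent}\>$, and multiplying on the right by $(1f)$ keeps us inside this right ideal.

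Let $C$ denote the unique circuit of $D$. For $f \in D \setminus C$ the circuit $C$ survives in $D - f$, so $D \cup 1 - f \supseteq C$ is dependent. The interesting case is $f \in C$, where $D - f$ is independent. Here the hypothesis $1 \in cl(D)$ is essential: it gives a circuit $C'$ of $D \cup 1$ necessarily containing $1$, since $C$ is the only circuit of $D$ and $1 \notin C$. If $f \notin C'$ then $C' \subseteq D \cup 1 - f$ and we are done. Otherwise $f \in C \cap C'$; since $C \neq C'$ (because $1 \in C' \setminus C$), circuit elimination inside $D \cup 1$ produces a circuit $C'' \subseteq (C \cup C') - f$. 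I then argue that $C''$ must contain $1$, for if $1 \notin C''$ then $C'' \subseteq D - f$, contradicting the independence of $D - f$. Hence $C'' \subseteq D \cup 1 - f$, which is therefore dependent.

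The main obstacle, and the only substantive point beyond the template of the previous claim, is the subcase $f \in C \cap C'$. Unlike the previous claim, removing $f$ from $D$ destroys all of $D$'s circuits, so dependence of $D \cup 1 - f$ cannot be read off immediately; the circuit-elimination step is precisely what allows a circuit through $1$ to survive the removal of $f$, turning the closure hypothesis $1 \in cl(D)$ into genuine dependence of $D \cup 1 - f$.
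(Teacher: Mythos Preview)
Your proof is correct and follows the template the paper intends: apply the lemma with $e=1$ and check that each $D\cup 1 - f$ is dependent. The paper only says the argument is ``exactly the same'' as for the previous claim, which glosses over the subcase you correctly flag---when $f$ lies in the unique circuit $C$, the set $D-f$ is independent, so dependence of $(D-f)\cup 1$ is not immediate as it was before. Your circuit-elimination argument handles this cleanly. A shorter alternative for that subcase: since $D$ is unicyclic, $r(D)=|D|-1$; the independent set $D-f$ has this size, so $cl(D-f)=cl(D)\ni 1$, and hence $(D-f)\cup 1$ is dependent without invoking circuit elimination.
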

For the unicyclic sets where $1 \notin cl(D)$ note that we can write
$D = I \cup e$, where $I$ is independent and $e$ is the smallest
element of the unique circuit of $D$. For a general independent set
$I$, though, it is possible to choose many elements $e$ such that $e$
is the minimum element of a circuit of $I \cup e$. The external
activity of an independent set $I$, denoted $ex(I)$, is the number of
elements $e$ such that $I \cup e$ contains a unique circuit and $e$ is
the minimum element of that circuit. Let $Ex(I)$ denote the set of
elements $e$ such that $e$ is the minimum element of a circuit of $I
\cup e$.
\begin{claim}
  We have the inclusion,
  \begin{multline*}
    \<c_D: 1 \notin cl(D), D \textup{ unicyclic}\>\subset \< c_D: 1
    \in D \textup{ dependent}\>\\+ \< c_{I \cup Ex(I)}: ex(I)=1, 1
    \notin cl(I)\>
  \end{multline*}
\end{claim}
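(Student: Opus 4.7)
I would prove the inclusion by induction on the external activity $ex(I)$. Given a unicyclic $D$ with $1 \notin cl(D)$, write $D = I \cup e_0$ where $I$ is independent and $e_0$ is the smallest element of the unique circuit of $D$. When $ex(I) = 1$ we have $Ex(I) = \{e_0\}$, so $c_D = c_{I \cup Ex(I)}$ is itself a generator of the second ideal on the right-hand side, giving the base case.

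For the inductive step $ex(I) \geq 2$, fix any $e \in Ex(I) \setminus \{e_0\}$, let $C_0$ and $C_e$ be the unique circuits of $I \cup e_0$ and $I \cup e$ respectively, and apply the preceding Lemma to $c_D$ with this $e$:
\[
c_D = c_{I \cup e}\,(e\, e_0) + \sum_{f \in I} c_{(I - f) \cup \{e_0, e\}}\,(e\, f).
\]
For each summand indexed by $f \in I$, I would analyze $(I - f) \cup \{e_0, e\}$ in two cases. If it contains two or more circuits, by the earlier Claim in this section its Young symmetrizer lies in the first ideal $\<c_D : 1 \in D \text{ dep}\>$; circuit elimination applied to $C_0$ and $C_e$ guarantees this for many choices of $f$. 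Otherwise the set is unicyclic, and writing it as $I' \cup e'$ in its canonical decomposition, a case analysis shows $ex(I') < ex(I)$, so the inductive hypothesis applies.

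The remaining term $c_{I \cup e}(e\, e_0)$ corresponds to another unicyclic set with the same independent part $I$ and hence the same external activity, so the induction does not close on that summand alone. To handle it, I would run the same argument for every $e \in Ex(I) \setminus \{e_0\}$ simultaneously, obtaining a system of congruences modulo the right-hand ideal among the family $\{c_{I \cup e'} : e' \in Ex(I)\}$. Together with the two-circuit mixed-set contributions, this system has enough rank to place each $c_{I \cup e'}$---and hence $c_D$---into the right-hand ideal.

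The main obstacle is precisely this final step: direct substitution between two Lemma-applications produces only tautologies in $\CC\S_n$, so the independence of the congruences must be extracted from additional matroid-theoretic input. Following the approach of Las Vergnas and Forge cited in the paper, the key observation is that circuit elimination applied to the pairs $(C_0, C_e)$ as $e$ varies over $Ex(I) \setminus \{e_0\}$ produces enough two-circuit mixed sets---absorbed into the first ideal by the preceding Claim---to decouple the $c_{I \cup e'}$'s and solve the system.
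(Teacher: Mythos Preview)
Your induction on $ex(I)$ has a genuine gap that you yourself flag but do not close. The term $c_{I\cup e}(e\,e_0)$ keeps the same independent part $I$ and hence the same external activity, and your plan to ``solve a system'' among the $c_{I\cup e'}$ for $e'\in Ex(I)$ is never carried out: you concede that direct substitution gives only tautologies, and the appeal to circuit elimination producing ``enough two-circuit mixed sets to decouple'' is an assertion, not an argument. There is a second unjustified step: for the unicyclic summands $(I-f)\cup\{e_0,e\}$ you write ``a case analysis shows $ex(I')<ex(I)$,'' but $I'$ is a genuinely different independent set from $I$ and no such case analysis is supplied; it is not clear this inequality even holds in general.

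The paper sidesteps both difficulties by inducting on a different quantity: the lexicographic position of the set $D=I\cup e$ itself, not on $ex(I)$. Choosing $f\in Ex(I)\setminus\{e\}$ and applying the Lemma expresses $c_{I\cup e}$ as a sum of terms $c_{(I-g\cup f)\cup e}(fg)$ over $g\in I$; the point is that each set $(I-g\cup f)\cup e$ is again unicyclic with $1\notin cl$, and is lexicographically smaller than $I\cup e$. Thus every summand falls directly to the inductive hypothesis, with no leftover term of equal weight and no linear system to analyze. The moral is that the right well-founded order here is on the set $D$, not on the auxiliary statistic $ex(I)$.
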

\begin{proof}
  Let $I \cup e$ be a unicyclic set that has $ex(I) > 1$ but $1 \notin
  Ex(I)$. Then there is an element $f\in Ex(I)-e$. We have
  \[
  c_{I \cup e} = \sum_{g \in I} c_{(I-g \cup f) \cup e}(fg)
  \]
  We see that for all $g \in I$, $(I-g \cup f) \cup e$ is unicyclic,
  does not contain $1$ in its closure and is lexicographically smaller
  than $I \cup e$. Assuming inductively that $c_{(I-g \cup f)\cup e}$
  is in the ideal
  \[
  \< c_D: 1 \in D \textup{ dependent}\>+ \< c_{I \cup Ex(I)}: ex(I)=1,
  1 \notin cl(I)\>
  \]
  we have that $c_{I \cup e}$ is in this ideal too.
\end{proof}
We now straighten the generators of the ideal
\[
\< c_{I \cup Ex(I)}: ex(I)=1, 1 \notin cl(I)\>.
\] 
If $I$ is independent of rank $r(M)-1$, has external activity equal to
one but does not contain $1$ in its closure then $I \cup 1$ is a
broken circuit base. However, for all elements $g \in I$, $(I-g \cup
1) \cup Ex(I)$ is a no broken circuit base of $M$. Since a Young
symmetrizer $c_S$, $|S|=r(M)$ is that of a standard Young tableau if
and only if $1 \in S$, we have proved
\begin{multline*}
  \<c_B: B \textup{ an nbc base} \>  \cap \< c_D: D \textup{ dependent}\> \\
  = \<c_B: B \textup{ an nbc base} \> \cap \< c_{I \cup ex(I)}:
  ex(I)=1, 1 \notin cl(I)\>.
\end{multline*}
Finally, every Young symmetrizer on the last ideal has support on a
unique broken circuit base containing $1$, so this intersection must
be zero. This completes the proof of the theorem.

\bibliography{whitney}\nocite{*}
\bibliographystyle{plain}
\end{document}